\newtheorem{thm}{Theorem}[section]
\newtheorem{cor}[thm]{Corollary}
\newtheorem{lemma}[thm]{Lemma}
\newtheorem{prop}[thm]{Proposition}
\begin{document}

\title{Graph Invertibility and Median Eigenvalues}
  \author{Dong Ye\thanks{Department of Mathematical Sciences and Center for Computational Sciences, Middle Tennessee State University,  Murfreesboro, TN 37132; Email: dong.ye@mtsu.edu}\ ,  
Yujun Yang\thanks{Department of Marine Sciences, Texas A\&M University at Galveston, Galveston, TX 77553; Email: yangy@tamug.edu}\ ,  
Bholanath Mandal\thanks{Department of Marine Sciences, Texas A\&M University at Galveston, Galveston, TX 77553} 
\ and 
Douglas J. Klein\thanks{Department of Marine Sciences, Texas A\&M University at Galveston, Galveston, TX 77553; Email: kleind@tamug.edu}  
}
\date{}

\maketitle

\begin{abstract}
Let $(G,w)$ be a weighted graph with a weight-function $w: E(G)\to \mathbb R\backslash\{0\}$.
A weighted graph $(G,w)$ is invertible to a new weighted graph if its adjacency matrix is invertible. 
A graph inverse has combinatorial interest and can be applied to bound median eigenvalues of a graph 
such as have physical meanings in Quatumn Chemistry. 
In this paper, we characterize the inverse of a weighted graph based on its Sachs subgraphs
that are spanning subgraphs with only $K_2$ or cycles (or loops) as components. The characterization 
can be used to find the inverse of a weighted graph based on its structures instead of its adjacency matrix. 
If a graph has its spectra split about the origin, i.e., half of eigenvalues are positive and half of them are negative, 
then its median eigenvalues can be bounded by estimating the largest and smallest eigenvalues of its inverse.
We characterize graphs with a unique Sachs subgraph and prove that these graphs
has their spectra split about the origin if they have a perfect matching.
As applications, we show that the median eigenvalues of stellated graphs of trees
and corona graphs belong to different halves of the interval $[-1,1]$.
\end{abstract}

\section{Introduction} 

In this paper, graphs may contain loops but no multiple edges. Let $G$ be a graph with 
vertex set $V(G)$ and edge set $E(G)$. Its adjacency matrix  $\mathbb A$ is defined as
the $ij$-entry $(\mathbb A)_{ij}=1$ if $ij\in E(G)$ and $(\mathbb A)_{ij}=0$ otherwise.
Assume that $\lambda_1\ge \lambda_2\ge \cdots\ge \lambda_n$ where $n=|V(G)|$, are the 
eigenvalues of $\mathbb A$ (also the eigenvalues of $G$).
In Quantum Chemistry, 
the eigenvalues of a molecular graph  have physical meanings. For example, the sum of
absolute value of eigenvalues of a graph $G$, also called the {\em energy} of $G$ \cite{JK}, is 
often equal to the total H\"{u}ckel $\pi$-electron
energy of the molecule represented by $G$. Also many physico-chemical parameters of molecules are determined by or are dependent upon the HOMO-LUMO gap \cite{FP,GR}, which is often given as  the difference between the median eigenvalues, $\lambda_{H}-\lambda_L$, where $H=\lfloor (n+1)/2\rfloor$ and $L=\lceil (n+1)/2\rceil$.  

Throughout the more traditional chemical literature, there has been extensive effort to deal with the HOMO-LUMO gap mostly in an explicit consideration of individual molecules case by case. A few mathematical methods have been developed especially in the last decade to characterize the HOMO-LUMO gaps of graphs in a general manner \cite{G,K,KM, KYY,BM1,BM2,BM3,ZC}. Recently, Mohar introduced a graph partition method to bound
the HOMO-LUMO gaps for subcubic graphs \cite{BM1,BM2,BM3}. However, not all graphs have these nice partition properties
and a desired partition is hard to find, even for plane subcubic graphs \cite{BM1}.

It is well-known that the eigenvalues of
a bipartite graph are symmetric about the origin. If the adjacency matrix $\mathbb A$ of 
a bipartite graph $G$ is invertible, then the reciprocal of the maximum eigenvalue of $\mathbb A^{-1}$ is equal to the
$\lambda_H$ of $G$ and the reciprocal of the least eigenvalue of $\mathbb A^{-1}$ is equal to $\lambda_L$.
Based on this fact, the invertibility of adjacency matrices of trees had been discussed in order to evaluate their HOMO-LUMO gaps \cite{G,K}, and 
later the method has been extended to bipartite graphs with a unique perfect matching \cite{KM,SC}.
Besides the chemical interests, the invertibility of adjacency matrices of graphs is of independent interest as indicated in \cite{G, MM}. For examples, 
the invertibility of adjacency matrices of graphs has connections to other interesting combinatorial
topics such as M\"{o}bius inversion of partially ordered sets (see the treatment in
Chapter 2 of Lov\'asz \cite{L}) \cite{G,SC} and Motzkin numbers \cite{DS,MM}. 

Here, the aim of this paper
is to extend this idea to graphs with more general settings. Note that, the eigenvalues of non-bipartite graphs are
not symmetric about the origin. But, our methodology works when the eigenvalues of a graph evenly split about the 
origin, i.e., half of them are positive and half of them are negative. Another purpose of this paper is
to discuss the invertibility of graphs. It is very clear when a matrix is invertible. But the inverse of an
adjacency matrix of a graph is not necessarily an adjacency matrix of another graph. In fact, 
if the adjacency matrix $\mathbb A$ of a graph $G$ is invertible and    
$\mathbb A^{-1}$ is an adjacency matrix of another simple graph, then $G$ has to be the graph $nK_2$ \cite{HM}.
Godsil \cite{G} defined an inverse of a bipartite graph $G$ with a unique perfect matching to be a graph, denoted by $G^{-1}$ with adjacency matrix diagonally similar
to the inverse of adjacency matrix of $G$, i.e., the adjacency matrix of $G^{-1}$ is $\mathbb D\mathbb A^{-1} \mathbb D$ for some diagonal matrix $\mathbb D$ with entries $1$ or $-1$ on its diagonal, where $\mathbb A$ is the adjacency matrix
of $G$ (see also \cite{KM}). This definition uniquely defines the inverse of a graph. But the invertible graphs are 
still quite limited. In order to make a more encompassing definition of the {\em inverse} of a graph, McLeman and McNicholas \cite{MM} defined
that a graph $G_1$ is an inverse of a graph $G_2$ if $\lambda$ is an eigenvalue of $G_1$ if and only if $1/\lambda$ is an eigenvalue of $G_2$. But the inverses of a graph defined by McLeman and McNicholas are not always unique because there exist cospectral invertible graphs (see Figure~\ref{fig:co-inverse}, cf. \cite{GM}). Based on these two different definitions, the invertibility of bipartite graphs with unique 
perfect matching has been discussed in \cite{G, KM,MM,SC}.  However, their methods can not be easily used for non-bipartite graphs. In these papers, the authors seek to deal with signs in the computation of the inverse of the adjacency 
matrix of the original graph. In order to avoid the special treatment of signs, 
we here modify the definition of graph inverse to weighted graphs, with signs allowed on the weights. 
The modified definition can uniquely and broadly define the inverse for graphs. 

\begin{figure}[!hbtp]\refstepcounter{figure}\label{fig:co-inverse}
\begin{center}
\includegraphics[scale=1]{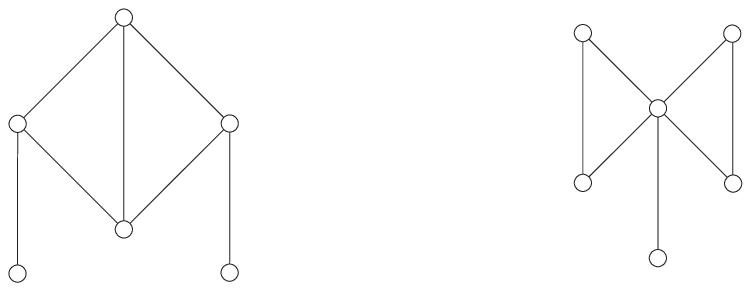}\\
{Figure \ref{fig:co-inverse}: The smallest cospectral invertible graphs.}
\end{center}
\end{figure}

A weighted graph $(G,w)$ is a graph with a weight function
$w: E(G)\to \mathbb R\backslash\{0\}$. The adjacency matrix of a weighted graph, denoted
by $\mathbb A$, is defined as
\[
\mathbb (\mathbb A)_{ij} :=\left\{
 \begin{array}{ll}
w(ij)    &\mbox{if $i j\in E(G)$;}\\
0  &\mbox{otherwise}
 \end{array}
 \right.
\]
where loops, with $w(ii)\ne 0$, are allowed. A weighted
graph $(G,w)$ is {\em invertible} if its adjacency matrix has an inverse that is also an adjacency matrix
of a weighted graph.  

A graph can be treated as a weighted graph with the constant weight function $w: E(G)\to 1$.
Another important family of weighted graphs is signed graphs. 
A {\em signed graph} $(G,\sigma)$ is a weighted graph  with a weight function 
$\sigma: E(G)\to \{-1,+1\}$, where $\sigma$ is called the {\em signature} of $G$ (see \cite{Z93}). Two 
signatures of a graph $G$ are {\em equivalent} to each other if one can be obtained from the other 
by changing the signatures of all edges in an edge-cut of $G$. 
A signed graph is {\em balanced} if it is equivalent to a graph.  In Godsil's definition, a bipartite graph
 with a unique perfect matching is {\em invertible} if its inverse is a balanced signed graph.
Signed graphs have extensive applications in combinatorics and matroid theory \cite{Z13}. For more details and 
interesting problems 
on signed graphs, one may refer to the survey of Zaslavsky \cite{Z98}. 

A {\em Sachs subgraph} is a spanning subgraph with only $K_2$ or cycles (including loops) as components. In this paper, we investigate the invertibility of weighted graphs and characterize their inverses
based on Sachs subgraphs. A characterization of graphs with a unique Sachs subgraph is 
obtained. For signed graphs with a unique Sachs subgraph, we show 
that their spectra split about the origin if they
have a perfect matching. As applications, we show that 
 the median eigenvalues of  stellated graphs of trees and corona graphs 
belong to the interval $[-1,1]$.

%%%%%%%%%%%%%%%%%%%%%%%%%
\section{Inverses of weighted graphs}
%%%%%%%%%%%%%%%%%%%%%%%%%

Let $(G,w)$ be a weighted graph. The following is a 
straight-forward proposition that establishes the 
equivalent relation between  the invertibility
of weighted graphs and the invertibility of real symmetric matrices.

\begin{prop}\label{prop-1}
Let $(G,w)$ be a weighted graph with adjacency matrix $\mathbb A$. Then $(G,w)$ is invertible if and only if $\det(\mathbb A)\ne 0$, and the inverse of an invertible weighted graph $(G,w)$ is unique.
\end{prop}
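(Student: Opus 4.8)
The plan is to reduce the proposition to two elementary facts from linear algebra by exploiting the bijection between weighted graphs on a fixed vertex set and real symmetric matrices. First I would record that, for a fixed vertex set $V$, the assignment of its adjacency matrix to a weighted graph is a bijection onto the set of real symmetric matrices indexed by $V$. Indeed, the adjacency matrix of any weighted graph is real and symmetric, and conversely any real symmetric matrix $\mathbb B$ is realized by exactly one weighted graph: declare $ij$ an edge (or, when $i=j$, a loop) of weight $\mathbb B_{ij}$ precisely when $\mathbb B_{ij}\ne 0$. Because the weight function takes values in $\mathbb R\backslash\{0\}$, the nonzero entries of $\mathbb B$ determine the edge set unambiguously, so this recipe is well defined and inverts the graph-to-matrix map.

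With this in hand, the equivalence is almost immediate. The forward direction is definitional: if $(G,w)$ is invertible then $\mathbb A$ admits an inverse adjacency matrix, so $\mathbb A$ is an invertible matrix and $\det(\mathbb A)\ne 0$. For the converse, suppose $\det(\mathbb A)\ne 0$; then $\mathbb A^{-1}$ exists, and since $(\mathbb A^{-1})^{\top}=(\mathbb A^{\top})^{-1}=\mathbb A^{-1}$ it is again symmetric. By the bijection above, $\mathbb A^{-1}$ is the adjacency matrix of a weighted graph (whose underlying edge set may of course differ from that of $G$), which is exactly what it means for $(G,w)$ to be invertible.

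Uniqueness I would deduce from the uniqueness of the matrix inverse together with injectivity of the graph-to-matrix correspondence: any weighted graph serving as an inverse of $(G,w)$ has adjacency matrix equal to $\mathbb A^{-1}$, and distinct weighted graphs on $V(G)$ have distinct adjacency matrices, so the inverse is determined. I do not expect a genuine obstacle here, as the argument is organizational rather than technical; the only points meriting care are that the inverse of a symmetric matrix is symmetric (so that $\mathbb A^{-1}$ really corresponds to a weighted graph rather than to an arbitrary matrix) and that the prohibition of zero weights is what makes the matrix-to-graph map single-valued.
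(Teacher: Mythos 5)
Your proof is correct and follows essentially the same route as the paper's: both deduce invertibility from $\det(\mathbb A)\ne 0$, use the identity $(\mathbb A^{-1})^{\intercal}=(\mathbb A^{\intercal})^{-1}=\mathbb A^{-1}$ to see that the inverse matrix is symmetric and hence the adjacency matrix of a weighted graph, and obtain uniqueness from uniqueness of the matrix inverse. Your only addition is making explicit the bijection between weighted graphs and real symmetric matrices (a point the paper leaves implicit), which is a sound clarification rather than a different argument.
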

\begin{proof}
Let $(G,w)$ be a weighted graph. Then $\det(\mathbb A)\ne 0$ if and only if
then $\mathbb A$ has an inverse $\mathbb A^{-1}$. 
Note that $(\mathbb A^{-1})^{\intercal}=(\mathbb A^{\intercal})^{-1} =\mathbb A^{-1}$. Hence $\mathbb A^{-1}$ is a symmetric matrix, which is 
corresponding to a weighted graph. Note that, $\mathbb A^{-1}$ is unique. So the 
the inverse of $(G,w)$ is unique. The proposition follows.
\end{proof}

The adjacency matrix $\mathbb A$ of a weighted graph $(G,w)$ is a linear transformation from
the vector space $\mathbb R^n$
to itself. If $(G,w)$ is invertible, then $\mathbb A$ is full rank. In other words, all column vectors
$\mathbf b_i$ of $\mathbb A$ for $i\in V(G)$ form a basis of $\mathbb R^n$. 

Throughout the paper, we always use $(\mathbb A)_{ij}$ to denote the $(i,j)$-entry of the matrix $\mathbb A$, 
and $\mathbb A_{(ij)}$ to denote the submatrix of $\mathbb A$ by deleting the $i$-th row and 
the $j$-th column.

A subgraph $H$ of a graph $G$ is spanning if $V(H)=V(G)$. A weighted graph $(H, w_H)$ is a  subgraph of
$(G, w)$, if $H$ is a subgraph of $G$ and $w_H(e)=w(e)$ for any $e\in E(H)\subseteq E(G)$. The weight of a 
subgraph $H$ is defined as $w(H)=\prod_{e\in E(H)}w(e)$. A spanning subgraph $S$
is called a {\em Sachs subgraph} of $G$ if every component of $S$ is either $K_2$ or a cycle (including loops). 
For a Sachs subgraph $S$, denote the set of all cycles of $S$ by $\mathcal C $, the set of all loops of $S$ by $L$ and the set of all
$K_2$'s of $S$ by $M $ which is a matching of $G$. So a Sachs subgraph consists of three parts:
cycles, loops and a matching. In the rest of the paper, we also denote a Sachs subgraph as $S=\mathcal C\cup M\cup L$.
The determinant of adjacency matrix of a graph can be represented by its Sachs subgraphs as follows.

\begin{thm}[Harary,\cite{H}]\label{thm:Harary}
Let $G$ be a simple graph and $\mathbb A$ the adjacency matrix. Then
\[\det(\mathbb A)=\sum_{S}  2^{|\mathcal C|}(-1)^{|\mathcal C|+|E(S)|},\]
where $S=\mathcal C\cup M$ is a Sachs subgraph.
\end{thm}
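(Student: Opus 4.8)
The plan is to expand $\det(\mathbb A)$ via the Leibniz permutation formula and then sort the surviving terms according to the spanning subgraph that their nonzero factors trace out. Writing
\[
\det(\mathbb A)=\sum_{\pi\in S_n}\operatorname{sgn}(\pi)\prod_{i=1}^n(\mathbb A)_{i,\pi(i)},
\]
a term is nonzero precisely when $(\mathbb A)_{i,\pi(i)}\neq 0$ for every $i$, i.e. when $i\,\pi(i)\in E(G)$ for each $i$. Since $G$ is simple it has no loops, so $(\mathbb A)_{ii}=0$ and $\pi$ can have no fixed point. Decomposing such a $\pi$ into disjoint cycles, every cycle has length at least $2$, and the edges $i\,\pi(i)$ used by a single permutation-cycle form either a $K_2$ (when the cycle is a transposition $(i\,j)$, corresponding to the edge $ij$) or a graph cycle of length $\ge 3$. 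Because $\pi$ permutes all of $V(G)$, the union of these edge-sets is a spanning subgraph whose components are $K_2$'s and cycles, that is, a Sachs subgraph $S=\mathcal C\cup M$.

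First I would make the correspondence between permutations and Sachs subgraphs precise. Given a Sachs subgraph $S$, each of its $K_2$ components determines a unique transposition, whereas each cycle of length $\ge 3$ can be traversed in two opposite directions, yielding two distinct permutation-cycles. Hence the number of permutations $\pi$ whose support-graph equals $S$ is exactly $2^{|\mathcal C|}$. Because $G$ is unweighted, each such $\pi$ contributes $\prod_i(\mathbb A)_{i,\pi(i)}=1$, so the whole contribution of $S$ is $2^{|\mathcal C|}$ times the common sign of these permutations.

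Next I would compute that common sign. The sign of a permutation is multiplicative over its cycle decomposition, and a permutation-cycle of length $k$ has sign $(-1)^{k-1}$. A transposition (from a $K_2$) thus has sign $-1$, and a cycle of length $k\ge 3$ contributes $(-1)^{k-1}$ regardless of its orientation. If $M$ consists of $m$ edges and $\mathcal C$ of cycles of lengths $k_1,\dots,k_r$ with $r=|\mathcal C|$, the total sign is
\[
(-1)^{m}\prod_{j=1}^{r}(-1)^{k_j-1}=(-1)^{\,m+\sum_j k_j-r}=(-1)^{\,|E(S)|+|\mathcal C|},
\]
using $|E(S)|=m+\sum_j k_j$ and $r=|\mathcal C|$, together with $(-1)^{-r}=(-1)^{r}$. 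Summing $2^{|\mathcal C|}(-1)^{|\mathcal C|+|E(S)|}$ over all Sachs subgraphs then gives the claimed formula.

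The main obstacle I anticipate is purely bookkeeping: establishing the clean bijection in which each Sachs subgraph is produced by exactly $2^{|\mathcal C|}$ permutations (the factor of $2$ arising only from cycles of length $\ge 3$, never from the $K_2$'s), and then verifying that the sign is genuinely independent of the chosen orientations so that $2^{|\mathcal C|}$ may be pulled out in front. Once the orientation count and the cycle-sign formula $(-1)^{k-1}$ are pinned down, the exponent collapses to $|E(S)|+|\mathcal C|$ by the elementary arithmetic above.
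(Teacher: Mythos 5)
Your proof is correct and is essentially the same argument the paper uses: although the paper cites Harary for this statement without reproving it, its proof of the weighted generalization (Theorem~\ref{thm:weight-det}) follows exactly your route---Leibniz expansion, the correspondence between fixed-point-free permutations and Sachs subgraphs, the $2^{|\mathcal C|}$ orientation count for cycles of length at least $3$, and the sign collapse to $(-1)^{|\mathcal C|+|E(S)|}$. No gaps.
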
 

The following result extends Theorem~\ref{thm:Harary}  to weighted graphs $(G,w)$ which may contain loops.

\begin{thm}\label{thm:weight-det}
Let $(G,w)$ be a weighted graph and $\mathbb A$ the adjacency matrix.
Then
\[\det(\mathbb A)=\sum_{S} 2^{|\mathcal C|} w(\mathcal C\cup L)\ w^2(M) (-1)^{|\mathcal C|+|L|+|E(S)|},\]
where $S=\mathcal C\cup M\cup L$ is a Sachs subgraph.
\end{thm}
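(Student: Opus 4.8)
The plan is to start from the Leibniz expansion of the determinant, $\det(\mathbb A)=\sum_{\pi\in S_n}\mathrm{sgn}(\pi)\prod_{i}(\mathbb A)_{i\,\pi(i)}$, and to read off which permutations $\pi$ contribute. A term survives only if $(\mathbb A)_{i\,\pi(i)}\ne 0$ for every $i$, which forces $i\,\pi(i)\in E(G)$ (with $\pi(i)=i$ allowed only when there is a loop at $i$). Thus every surviving $\pi$ is supported on the edge set of $G$, and the key first step is the standard observation that the functional (oriented) digraph of such a $\pi$ decomposes into disjoint directed cycles covering $V(G)$. A directed $1$-cycle is a loop; a directed $2$-cycle $i\to j\to i$ uses the single undirected edge $ij$ twice; and a directed $k$-cycle with $k\ge 3$ traverses an undirected $k$-cycle of $G$ in one of its two orientations. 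Grouping the vertices by cycle type, each $\pi$ therefore corresponds exactly to a choice of a Sachs subgraph $S=\mathcal C\cup M\cup L$ together with an orientation of each cycle in $\mathcal C$; conversely, each such oriented Sachs subgraph gives a unique $\pi$. This bijection is the conceptual heart of the argument.

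Next I would compute the contribution of a single $\pi$ associated with an oriented Sachs subgraph. The product $\prod_i(\mathbb A)_{i\,\pi(i)}$ factors over the cycles of $\pi$. A loop at $v$ contributes $w(v)$ (the weight $w(vv)$), giving the factor $w(L)$. A directed $2$-cycle on an edge $e\in M$ contributes $w(e)\cdot w(e)=w(e)^2$, producing $w^2(M)$. A directed $k$-cycle ($k\ge3$) contributes $\prod_{e\in C}w(e)=w(C)$, and crucially this value is the same for both orientations of the undirected cycle $C$ since the same undirected edges are traversed; hence the two orientations together give the factor $2$ per cycle, accounting for $2^{|\mathcal C|}$, and the cycle weights multiply to $w(\mathcal C)$. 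Combining, the magnitude of the total weight of the oriented Sachs subgraphs over a fixed $S$ is $2^{|\mathcal C|}\,w(\mathcal C\cup L)\,w^2(M)$, matching the stated coefficient once we handle signs.

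The remaining step is the sign bookkeeping, which is where the care is required. The sign of a permutation is $(-1)^{n-c(\pi)}$, where $c(\pi)$ is its number of cycles; equivalently each cycle of length $k$ contributes a factor $(-1)^{k-1}$. I would count, for a given $S$: each loop is a $1$-cycle contributing $(-1)^{0}=+1$; each $K_2$ is a $2$-cycle contributing $(-1)^{1}=-1$, and there are $|M|$ of them; each undirected cycle $C$ of length $k$ (in either orientation) is a $k$-cycle contributing $(-1)^{k-1}$. Since the two orientations of $C$ share the same sign, the factor $2^{|\mathcal C|}$ and the sign survive the summation over orientations together. I would then check that the accumulated exponent collapses to $(-1)^{|\mathcal C|+|L|+|E(S)|}$: writing $|E(S)|=|M|+|L|+\sum_{C\in\mathcal C}|C|$ and using that the $K_2$'s contribute $(-1)^{|M|}$, the loops $(-1)^{0}$, and each cycle $(-1)^{|C|-1}=(-1)^{|C|+1}$, the total sign becomes $(-1)^{|M|}\prod_{C}(-1)^{|C|+1}=(-1)^{|M|+|L|+\sum|C|+|\mathcal C|}=(-1)^{|E(S)|-|L|+|L|+|\mathcal C|}$, which equals the claimed $(-1)^{|\mathcal C|+|L|+|E(S)|}$ modulo $2$. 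Summing over all Sachs subgraphs $S$ then yields the formula. The main obstacle I anticipate is precisely this sign reconciliation — in particular confirming that loops ($k=1$) and $K_2$'s ($k=2$) slot correctly into the same cyclic-orientation framework and that the parity manipulation with $|E(S)|$ is exact rather than merely off by a fixed constant; Harary's unweighted Theorem~\ref{thm:Harary} serves as a useful consistency check for the pure-matching and cycle cases where $w\equiv 1$ and $L=\varnothing$.
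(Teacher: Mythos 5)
Your approach is the same as the paper's: expand $\det(\mathbb A)$ by the Leibniz formula, observe that the surviving permutations are exactly those whose cycle decomposition corresponds to a Sachs subgraph together with a choice of orientation for each cycle of length at least $3$ (whence the factor $2^{|\mathcal C|}$), and compute the weight and sign cycle by cycle. The bijection, the weight bookkeeping ($w(L)$ from loops, $w^2(M)$ from $2$-cycles, $w(\mathcal C)$ from longer cycles, and invariance under orientation reversal), and the overall structure all match the paper's proof.

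However, your sign chain contains two compensating parity errors. From your own (correct) per-cycle contributions --- $(-1)^{|M|}$ from the $K_2$'s, $(-1)^{0}$ from the loops, $(-1)^{|C|-1}$ from each cycle --- the total sign is $(-1)^{|M|+\sum_{C\in\mathcal C}|C|+|\mathcal C|}$, with no $|L|$ in the exponent. Your next equality, $(-1)^{|M|}\prod_{C}(-1)^{|C|+1}=(-1)^{|M|+|L|+\sum|C|+|\mathcal C|}$, silently inserts $|L|$ and is false whenever $|L|$ is odd; your final step, asserting that $(-1)^{|E(S)|+|\mathcal C|}$ equals the claimed $(-1)^{|\mathcal C|+|L|+|E(S)|}$, is off by the same factor $(-1)^{|L|}$, so the two slips cancel and your conclusion happens to come out right. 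The clean reconciliation you were looking for is this: since each loop is a single edge of $S$, one has $|E(S)|=|M|+|L|+\sum_{C\in\mathcal C}|C|$, hence
\[
|\mathcal C|+|L|+|E(S)|=|\mathcal C|+|M|+\sum_{C\in\mathcal C}|C|+2|L|\equiv |\mathcal C|+|M|+\sum_{C\in\mathcal C}|C| \pmod 2,
\]
which is exactly the exponent produced by the permutation cycles. In other words, the $|L|$ appearing explicitly in the theorem's sign is absorbed because $|L|$ also appears once inside $|E(S)|$, making its total contribution even; no loop-dependent sign survives, consistent with a fixed point of a permutation having sign $+1$.
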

\begin{proof}
By the definition of determinant,
\[\det(\mathbb A)=\sum_{\pi} \mbox{sgn}(\pi)\prod_{i\in V(G)} (\mathbb A)_{i\pi(i)},\]
where $\pi$ is a permutation on $V(G)=\{1,2,..,n\}$. A permutation $\pi$ on $V(G)$ contributing
to $\det(\mathbb A)$ corresponds to a Sachs subgraph $S$: a cycle of $\pi$ with length $k\notin\{1,2\}$ corresponds to a cycle of $G$
with length $k$, a cycle of $\pi$ of length $1$ (fixing a vertex) corresponds to a loop of $G$, and a cycle
of length 2 corresponds to an edge (or $K_2$). 
Hence,
\begin{align*}\mbox{sgn}(\pi)\prod_{i\in V(G)} (\mathbb A)_{i\pi(i)}&=(-1)^{|\mathcal C|+|E(\mathcal C)|+|M|}\prod_{C\in \mathcal C}w(C) \cdot \prod_{(i)\in \pi} (\mathbb A)_{ii} \prod_{(ij)\in \pi} (\mathbb A)_{ij} (\mathbb A)_{ji}\\
&=(-1)^{|\mathcal C|+|L|+|E(S)|}w(\mathcal C)w(L)w^2(M).
\end{align*} 
But a Sachs subgraph $S=\mathcal C\cup M\cup L$ 
 corresponds to $2^{|\mathcal C|}$ permutations because for each cycle $C=i_1i_2\cdots i_k$ ($k\ge 3$), 
there are two different corresponding cyclic permutations $(i_1i_2\cdots i_k)$ and $(i_1i_ki_{k-1}\cdots i_3 i_2)$. 
So, 
\begin{align*}
\det(\mathbb A)&=\sum_{\pi} \mbox{sgn}(\pi)\prod_{i\in V(G)} (\mathbb A)_{i\pi(i)}\\
                         &=\sum_{S} 2^{|\mathcal C|} w(\mathcal C\cup L)\ w^2(M) (-1)^{|\mathcal C|+|L|+|E(S)|}.
\end{align*}
This completes the proof.
\end{proof}

For some special weighted graphs, the formula in Theorem~\ref{thm:weight-det} can be simplified. For example, via the following corollary.

\begin{cor}\label{cor-sign}
Let $(G,\sigma)$ be a signed simple graph with a unique Sachs subgraph.  If $G$ has a perfect matching $M$, then $\det(\mathbb A )=(-1)^{|M|}$,
where $\mathbb A$ is the adjacency matrix of $(G,\sigma)$.
\end{cor}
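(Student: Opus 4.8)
The plan is to reduce everything to the determinant formula of Theorem~\ref{thm:weight-det} after pinning down exactly what the unique Sachs subgraph must be. The first observation I would make is that a perfect matching $M$, regarded as a spanning subgraph of $G$, is itself a Sachs subgraph: every one of its components is a copy of $K_2$, with no cycles and no loops present. Since $(G,\sigma)$ is assumed to have a \emph{unique} Sachs subgraph, this forces that unique Sachs subgraph $S$ to be $M$ itself. Consequently, in the decomposition $S=\mathcal C\cup M\cup L$ we have $\mathcal C=\varnothing$ and $L=\varnothing$, so that $|\mathcal C|=0$, $|L|=0$, and $E(S)=E(M)$, giving $|E(S)|=|M|$.

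With the Sachs subgraph identified, I would apply Theorem~\ref{thm:weight-det}, in which the sum collapses to the single term indexed by $S=M$. The remaining work is to evaluate the three weight/sign factors of that lone term. The factor $2^{|\mathcal C|}$ equals $2^0=1$; the factor $w(\mathcal C\cup L)$ is an empty product and hence equals $1$; and, crucially, the factor $w^2(M)=\prod_{e\in M}\sigma(e)^2$ equals $1$ because $(G,\sigma)$ is a \emph{signed} graph, so every weight $\sigma(e)\in\{-1,+1\}$ squares to $1$. Assembling these, the sign exponent is $|\mathcal C|+|L|+|E(S)|=0+0+|M|$, and the formula yields
\[
\det(\mathbb A)=2^{0}\cdot 1\cdot 1\cdot(-1)^{|M|}=(-1)^{|M|},
\]
which is the desired conclusion.

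The argument is short, so the only genuinely load-bearing step is the uniqueness deduction: one must invoke that a perfect matching always qualifies as a Sachs subgraph (no cycles, no loops), so that the uniqueness hypothesis leaves no room for any term beyond $S=M$ in the determinant expansion. Everything after that is routine evaluation of empty products and of the squared $\pm 1$ weights. I would therefore spend the bulk of the write-up making the identification $S=M$ explicit and simply record the three factor evaluations, rather than recomputing any determinant from scratch.
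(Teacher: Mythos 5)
Your proof is correct and follows essentially the same route as the paper's: identify the perfect matching $M$ as a Sachs subgraph, use uniqueness to conclude it is \emph{the} Sachs subgraph (so $\mathcal C=\emptyset$, $L=\emptyset$), and then collapse the sum in Theorem~\ref{thm:weight-det} to the single term $2^{0}\cdot\sigma^2(M)\cdot(-1)^{|M|}=(-1)^{|M|}$, using $\sigma(e)\in\{-1,+1\}$ to kill the $\sigma^2(M)$ factor. No gaps; this matches the paper's argument step for step.
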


\begin{proof} Note that, a perfect matching is a Sachs subgraph. Since $(G,\sigma)$ is simple and
has a unique Sachs subgraph, it follows that the unique Sachs subgraph of $G$ is the perfect 
matching $M$.
By Theorem~\ref{thm:weight-det}, and the fact that  $\mathcal C=\emptyset$ and $L=\emptyset$,
\begin{align*}
\det(\mathbb A)&=\sum_{S}2^{|\mathcal C|}\sigma(\mathcal C\cup L)\sigma^2(M)(-1)^{|\mathcal C|+|L|+|E(S)|}\\
&=(-1)^{|M|}\sigma^2(M).
\end{align*}
Note that $\sigma:E(G)\to \{-1,1\}$. So $\sigma^2(M)=1$. Further, $\det(\mathbb A)=(-1)^{|M|}$.
\end{proof}

Let $(G,w)$ be an invertible weighted graph. The next result characterizes the inverse of $(G,w)$.

\begin{thm}\label{thm:weight-inverse}
Let $(G,w)$ be a weighted graph with adjacency matrix $\mathbb A$, and 
\[\mathcal P_{ij}=\{P| P \mbox{ is a path joining } i \mbox{ and } j\ne i \mbox{ such
that } G-V(P) \mbox{ has   a Sachs subgraph } S\}.\]
If $(G,w)$ has an inverse $(G^{-1}, w^{-1})$, then 
\[w^{-1}(ij)=\left \{
 \begin{array}{ll}
\displaystyle \frac{1}{\det (\mathbb A)} \sum_{P\in \mathcal P_{ij}} \Big (w(P) \big (\sum_{S}w(\mathcal C\cup L)\ w^2(M)\ 2^{|\mathcal C|}(-1)^{|\mathcal C|+|L|+|E(S)\cup E(P)|}\big )\Big) &\mbox{if } i\ne j; \\
\displaystyle \frac{1}{\det (\mathbb A)}\det(\mathbb A_{(ii)})&\mbox{otherwise}
\end{array} \right .\]
where $S=\mathcal C\cup M\cup L$ is a Sachs subgraph of $G-V(P)$. 
\end{thm}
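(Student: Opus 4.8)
The plan is to start from the classical adjugate formula for the inverse, namely $(\mathbb A^{-1})_{ij} = (-1)^{i+j}\det(\mathbb A_{(ji)})/\det(\mathbb A)$ (legitimate since the hypothesis that $(G,w)$ has an inverse forces $\det(\mathbb A)\ne 0$ by Proposition~\ref{prop-1}), and to give a combinatorial reading of the minors in terms of paths and Sachs subgraphs, in the same spirit as the permutation expansion used in Theorem~\ref{thm:weight-det}. Since $\mathbb A$ is symmetric we have $(\mathbb A_{(ij)})^{\intercal}=\mathbb A_{(ji)}$ and hence $\det(\mathbb A_{(ji)})=\det(\mathbb A_{(ij)})$, so it suffices to analyze $\det(\mathbb A_{(ij)})$. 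For $i=j$ the cofactor sign is $(-1)^{2i}=1$, and the claimed diagonal entry $\det(\mathbb A_{(ii)})/\det(\mathbb A)$ follows immediately; so the real work is the off-diagonal case.

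For $i\ne j$ I would use the fact that $(-1)^{i+j}\det(\mathbb A_{(ij)})$ is precisely the coefficient of the entry $(\mathbb A)_{ij}$ in $\det(\mathbb A)$, i.e.
\[
(-1)^{i+j}\det(\mathbb A_{(ij)}) = \sum_{\sigma:\,\sigma(i)=j} \mbox{sgn}(\sigma)\prod_{k\ne i}(\mathbb A)_{k\sigma(k)},
\]
the sum ranging over permutations $\sigma$ of $V(G)$ with $\sigma(i)=j$. Each such $\sigma$ is decomposed by its cycle structure exactly as in the proof of Theorem~\ref{thm:weight-det}. The cycle through $i$ also contains $j$, and after dropping the absent factor $(\mathbb A)_{ij}$ the remaining factors $(\mathbb A)_{j\sigma(j)}\cdots(\mathbb A)_{\sigma^{-1}(i)\,i}$ trace out a path $P$ joining $i$ and $j$ whose vertex set is the support of that cycle; as $\sigma$ is a bijection these vertices are distinct, so $P$ is genuinely a path and its factor-product equals $w(P)$. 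The remaining cycles of $\sigma$ permute $V(G)\setminus V(P)$ and, again by the argument of Theorem~\ref{thm:weight-det}, realize a Sachs subgraph $S=\mathcal C\cup M\cup L$ of $G-V(P)$; permutations whose complement admits no Sachs subgraph contribute zero, which is exactly why the outer sum ranges over $P\in\mathcal P_{ij}$.

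The crux, and the step I expect to be most delicate, is the sign bookkeeping. I would split $\mbox{sgn}(\sigma)$ as the sign of the $i$-cycle times the sign of the restriction to $V(G)\setminus V(P)$. The $i$-cycle moves $|V(P)|$ points, so its sign is $(-1)^{|V(P)|-1}=(-1)^{|E(P)|}$; and grouping the $2^{|\mathcal C|}$ permutations of the complement that realize the same $S$ (each cycle of length at least $3$ admitting two orientations) reproduces, verbatim from Theorem~\ref{thm:weight-det}, the factor $2^{|\mathcal C|}\,w(\mathcal C\cup L)\,w^2(M)\,(-1)^{|\mathcal C|+|L|+|E(S)|}$. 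I also need the observation that each undirected path $P$ with endpoints $i,j$ determines a unique admissible orientation of the $i$-cycle (forced by $\sigma(i)=j$ together with the path edges), so no spurious double counting arises. Finally, because $E(P)$ and $E(S)$ live on disjoint vertex sets they are disjoint edge sets, whence $(-1)^{|E(P)|+|E(S)|}=(-1)^{|E(S)\cup E(P)|}$; assembling the three sign contributions and dividing by $\det(\mathbb A)$ yields exactly the claimed expression for $w^{-1}(ij)$.
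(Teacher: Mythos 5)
Your proposal is correct and follows essentially the same route as the paper: Cramer's rule, identification of the cofactor $(-1)^{i+j}\det(\mathbb A_{(ij)})$ with the permutation sum over $\sigma(i)=j$ (which the paper obtains via an auxiliary matrix $\mathbb M_{i,j}$ and Laplace expansion rather than the coefficient-of-the-entry fact, a purely cosmetic difference), followed by the identical decomposition of each such permutation into a path joining $i$ and $j$ plus a Sachs subgraph of $G-V(P)$, with the same sign and $2^{|\mathcal C|}$ bookkeeping. If anything, you make explicit two points the paper leaves implicit, namely the unique admissible orientation of the cycle through $i$ forced by $\sigma(i)=j$, and the disjointness of $E(P)$ and $E(S)$ that justifies writing $(-1)^{|E(S)\cup E(P)|}$.
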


\begin{proof}
Since $(G,w)$ is invertible,
$\mathbb A^{-1}$ exists and is an adjacency matrix of $(G^{-1},w^{-1})$. 
According to the definition of the inverse
of a weighted graph, $w^{-1}(ij)=(\mathbb A^{-1})_{ij}$. 

By Proposition~\ref{prop-1} and Cramer's rule,
\[(\mathbb A^{-1})_{ij}=(\mathbb A^{-1})_{ji}=\frac{ c_{ij} }{\det(\mathbb A)}\]
where $ c_{ij}=(-1)^{i+j} \det(\mathbb A_{(ij)})$.
Let $\mathbb M_{i,j}$ be the matrix obtained from $\mathbb A$ by replacing  the $(i,j)$-entry 
by 1 and all other entries 
in the $i$-th row and $j$-th column by 0. Then by Laplace expansion, $c_{ij}=\det(\mathbb M_{i,j})$

If $i=j$, then $\det(\mathbb M_{i,i})=\det(\mathbb A_{(ii)})$. So $w^{-1}(ii)=\det(\mathbb A_{(ii)})/\det(\mathbb A)$. So in the following, assume that $i\ne j$.

Since all $(i,k)$-entries ($k\ne j$) of $\mathbb M_{i,j}$ 
are equal to 0
and its $(i,j)$-entry is 1, only permutations taking $i$ to $j$ contribute 
to the the determinant of $\mathbb M_{i,j}$.
Let $\Pi_{i\to j}$ be the family of  all permutations on $V(G)=\{1,2,...,n\}$ taking $i$ to $j$. 
Denote the cycle of $\pi$ permuting $i$ to $j$ by $\pi_{ij}$. For convenience, $\pi_{ij}$ is also
used to denote the set of vertices which corresponds to the elements in the permutation 
cycle $\pi_{ij}$, for example, $V(G)\backslash \pi_{ij}$
denotes the set of vertices in $V(G)$ but not in $\pi_{ij}$. Denote the permutation of $\pi$ restricted on $V(G)\backslash \pi_{ij}$ by  $\pi \backslash \pi_{ij}$. 
Then 
\begin{align*}
\det(\mathbb M_{i, j})
&=\sum_{\pi \in \Pi_{i\to j} } \mbox{sgn}(\pi)\prod_{k\in V(G)\backslash \{i\} } w(k\pi(k))\\
&=\sum_{\pi \in \Pi_{i\to j}}\big (\mbox{sgn}(\pi_{ij}) \prod_{k\in  \pi_{ij} \backslash \{i\}} w(k\pi(k))\big ) \ 
\big (\mbox{sgn}(\pi \backslash \pi_{ij})\prod_{k\in V(G)\backslash \pi_{ij}} w(k\pi(k)) \big )\\
&=\sum_{\pi \in \Pi_{i\to j}}\big( (-1)^{|E(P)|}w(P)\big) \big( w(\mathcal C\cup L) 
w^2(M) (-1)^{|\mathcal C|+|L|+|E(S)|}\big )\\
&=\sum_{P\in \mathcal P_{ij}} w(P) \big(\sum_{S} w(\mathcal C\cup L) w^2(M) 2^{|\mathcal C|}(-1)^{|\mathcal C|+|L|+|E(S)\cup E(P)|}\big ).
\end{align*}
This completes the proof.
\end{proof}

In the above theorem, $\det(\mathbb A_{(ii)})$ can be represented by a formula in terms of all Sachs subgraphs
of the subgraph $(G-i,w)$ obtained from $(G,w)$ by deleting the vertex $i$ and any
incident loop and edges as given 
in Theorem~\ref{thm:weight-det}.
A weighted graph $(G,w)$ is {\em simply invertible} or has a {\em simple inverse} if its inverse is a weighted simple graph (i.e., without loops). The following proposition is a direct corollary of Proposition~\ref{prop-1} and Theorem~\ref{thm:weight-inverse}.

\begin{prop}\label{prop-2}
A weighted graph $(G,w)$ is simply invertible if
and only if for every vertex $i$, $(G-i, w)$ is not invertible.
\end{prop}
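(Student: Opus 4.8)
The plan is to read the diagonal entries of the inverse adjacency matrix and recognize them as determinant ratios. By definition, $(G,w)$ is simply invertible precisely when $\mathbb A^{-1}$ exists and, viewed as the adjacency matrix of $(G^{-1},w^{-1})$, carries no loops; equivalently, $\mathbb A$ is nonsingular and every diagonal entry $w^{-1}(ii)=(\mathbb A^{-1})_{ii}$ vanishes. So the whole question reduces to understanding when these diagonal entries are zero.

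First I would invoke the diagonal case of Theorem~\ref{thm:weight-inverse}, which gives $w^{-1}(ii)=\det(\mathbb A_{(ii)})/\det(\mathbb A)$ for each vertex $i$. The key structural observation is that $\mathbb A_{(ii)}$, the submatrix obtained by deleting the $i$-th row and $i$-th column of $\mathbb A$, is exactly the adjacency matrix of the weighted graph $(G-i,w)$ obtained by removing the vertex $i$ together with its incident loop and edges. This identifies the numerator with the determinant of the adjacency matrix of $(G-i,w)$.

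Next, since $(G,w)$ is invertible we have $\det(\mathbb A)\ne 0$, so for each $i$ the equivalence $w^{-1}(ii)=0 \iff \det(\mathbb A_{(ii)})=0$ holds. Applying Proposition~\ref{prop-1} to $(G-i,w)$, the right-hand condition says precisely that $(G-i,w)$ is not invertible. Hence $(G^{-1},w^{-1})$ has no loop at $i$ iff $(G-i,w)$ is not invertible, and the inverse is loop-free (simple) iff this holds simultaneously for every vertex $i$. Assembling these per-vertex equivalences yields the proposition.

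The one point that needs care is the standing assumption that $(G,w)$ itself is invertible, since both the division by $\det(\mathbb A)$ and the appeal to Theorem~\ref{thm:weight-inverse} require $\det(\mathbb A)\ne 0$. This is harmless for the forward direction, where simple invertibility already includes invertibility; for the converse one should treat $(G,w)$ as invertible (otherwise a loopless graph with $\mathbb A=0$ would satisfy the right-hand condition vacuously while failing to be invertible). I expect this bookkeeping, rather than any genuine computation, to be the only real obstacle, the determinant identity itself being immediate from Theorem~\ref{thm:weight-inverse}.
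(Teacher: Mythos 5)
Your proof is correct and is precisely the argument the paper intends: the paper states Proposition~\ref{prop-2} as a direct corollary of Proposition~\ref{prop-1} and Theorem~\ref{thm:weight-inverse} without writing out the details, and your derivation via $w^{-1}(ii)=\det(\mathbb A_{(ii)})/\det(\mathbb A)$, together with the identification of $\mathbb A_{(ii)}$ as the adjacency matrix of $(G-i,w)$, is exactly that omitted computation. Your further observation that the converse only makes sense under the standing hypothesis that $(G,w)$ is invertible (otherwise, e.g., an edgeless graph on at least two vertices satisfies the right-hand condition while failing to be invertible) is a correct refinement of a point the paper leaves implicit.
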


%A weighted graph $(G,w)$ is {\em invertible-critical} if $(G,w)$ is invertible, but for every vertex $i\in V(G)$, 
%the graph $(G-i,w)$ is not invertible. It is interesting to ask which graphs are 
%invertible-critical. The following proposition shows that invertible-critical property is 
%equivalent to simply invertible property, which follows directly from 
%\begin{thm}
%Let $G$ be a graph with a perfect matching $M$. If $G$ has a unique Sachs subgraph,
%then $G$ has an inverse $(G^{-1}, w^{-1})$
%such that
%\[w^{-1}(e_{ij})=\sum_{P\in \mathcal P_{ij}}(-1)^{|E(P\backslash M)|}.\]
%\end{thm}

%\begin{prop}\label{prop-2}
%A weighted graph $(G, w)$ has a simple inverse if and only if $(G,w)$ is invertible-critical. 
%\end{prop}

%\begin{prob}
%Which weighted graphs are invertible-critical?
%\end{prob}

The following proposition shows that an invertible simple weighted bipartite graph is always simply
invertible.

\begin{prop}
Every invertible simple weighted bipartite graph is simply invertible. 
\end{prop}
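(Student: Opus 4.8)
The plan is to reduce the claim to Proposition~\ref{prop-2} and then dispose of the resulting condition by a parity argument on Sachs subgraphs. By Proposition~\ref{prop-2}, it suffices to show that for \emph{every} vertex $i$ of $G$ the vertex-deleted weighted graph $(G-i,w)$ fails to be invertible, i.e. $\det(\mathbb A_{(ii)})=0$ where $\mathbb A_{(ii)}$ is the adjacency matrix of $(G-i,w)$. I would establish this directly from the determinant formula of Theorem~\ref{thm:weight-det}, whose sum ranges over all Sachs subgraphs: the point is that $G-i$ has \emph{no} Sachs subgraph at all, so the sum is empty and the determinant vanishes.

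First I would record the structural consequences of the hypotheses. Since $G$ is simple it has no loops, so the loop set $L$ is empty in every Sachs subgraph of $G$ or of any subgraph of $G$; since $G$ is bipartite it has no odd cycles. Hence every component of any such Sachs subgraph is either a $K_2$ or an even cycle, and in particular has even order. A Sachs subgraph is spanning, so the number of vertices it covers equals the order of the ambient graph; being a disjoint union of even-order components, that order must be even.

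Next I would pin down the parity of $n=|V(G)|$. Because $(G,w)$ is invertible we have $\det(\mathbb A)\neq 0$, and by Theorem~\ref{thm:weight-det} this forces $G$ to possess at least one Sachs subgraph; by the previous paragraph $n$ is therefore even. Consequently each $G-i$ has odd order $n-1$. But $G-i$ is again simple and bipartite, so any Sachs subgraph of it would be a spanning disjoint union of even-order components covering an odd number of vertices, which is impossible. Thus $G-i$ has no Sachs subgraph, and Theorem~\ref{thm:weight-det} yields $\det(\mathbb A_{(ii)})=0$ as an empty sum. Applying this to every $i$ and invoking Proposition~\ref{prop-2} finishes the proof.

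As for the main obstacle: there is no genuine computational difficulty, since the whole argument rests on a single parity observation. The one point demanding care is ensuring that the simplicity hypothesis is actually used, for it is precisely what guarantees $L=\emptyset$ and hence that every Sachs component has even order. (An equivalent linear-algebraic route would write $\mathbb A$ in bipartite block form $\left(\begin{smallmatrix}0 & B\\ B^{\intercal} & 0\end{smallmatrix}\right)$, note that invertibility forces $B$ to be square, and observe that deleting a vertex makes the off-diagonal block non-square, so its rank is at most $n-2$ and the deleted matrix is singular; I prefer the Sachs-subgraph version since it remains within the combinatorial framework developed in the paper.)
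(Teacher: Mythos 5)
Your proposal is correct and follows essentially the same route as the paper's own proof: both use Theorem~\ref{thm:weight-det} to conclude that invertibility forces a Sachs subgraph (hence, by simplicity and bipartiteness, an even number of vertices), then observe that each $G-i$ has odd order and thus no Sachs subgraph, so $(G-i,w)$ is not invertible, and finish by Proposition~\ref{prop-2}. Your write-up merely makes the parity argument slightly more explicit than the paper does.
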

\begin{proof}
Let $(G,w)$ be an invertible weighted graph such that $G$ is simple and bipartite. Then 
$\det(\mathbb A)\ne 0$ where $\mathbb A$ is the adjacency matrix of
$(G,w)$. So $(G,w)$ has at least one Sachs subgraph 
$S=\mathcal C\cup M\cup L$. Further, $L=\emptyset$ because $G$ is simple.
So a simple bipartite graph with a Sachs subgraph has even number 
of vertices. For every vertex $i\in V(G)$, $(G-i,w)$ has 
no Sachs subgraph because $G-i$ is bipartite and has odd number of vertices. 
So $(G-i, w)$ is not invertible.  It follows from Proposition~\ref{prop-2} that 
$(G,w)$ is simply invertible.
\end{proof}

The inverses of simple bipartite graphs with a unique  Sachs subgraph (i.e., a unique perfect matching)
have been discussed in \cite{KM,MM,SC}. In the next section, we consider the inverse of signed graphs with a unique Sachs subgraph and extend results in \cite{KM,MM,SC} for non-bipartite graphs.

%\begin{thm}
%Let $(G,\sigma)$ be a graph with a perfect matching $M$. If $G$ has a unique Sachs subgraph, then $(G,%\sigma)$ has a simple inverse. 
%\end{thm}
%\begin{proof}
%Since all Sachs subgraph of $G$ except $M$ contains an isolated vertex, it follows 
%that $\det(\mathbb A_G)=(-1)^{|M|}\sigma(M)\in \{-1,1\}$. So $(G,\sigma)$ is invertible.
%\end{proof}

%%%%%%%%%%%%%%%%%%%%%%%%%%%%%%%%%%%%%%%%
\section{Signed graphs with a unique Sachs subgraph}
%%%%%%%%%%%%%%%%%%%%%%%%%%%%%%%%%%%%%%%%

Let $(G,\sigma)$ be a signed graph. If $(G,\sigma)$ has a unique Sachs 
subgraph, then the determinant of its adjacency matrix is not zero by 
Theorem~\ref{thm:weight-det}. By Proposition~\ref{prop-1}, a signed graph with a unique 
Sachs subgraph is always invertible.
In this section, we study properties of the inverse of signed graphs $(G,\sigma)$. 

Before proceeding to our results, we need some definitions. Let $G$ be a graph and $M$ a matching
$G$. A cycle of $G$ is {\em $M$-alternating} if the edges of $G$ alternate between $M$ and
$E(G)\backslash M$. So an $M$-alternating cycle is always of even size. A path $P$ (or a cycle $C$) of $G$ is
{\em $M$-alternating} if all vertices of $P$ (or $C$) are covered by $M$ and the edges of $P$ (or $C$)
alternate between $M$ and $E(G)\backslash M$. Let $G$ have a Sachs subgraph 
$S=\mathcal C\cup M\cup L$. If $\mathcal C$ contains a cycle of even size, then the edges of the cycle 
can be partitioned into two disjoint matchings. Replacing the cycle in $S$ by any one of these two matchings
generates another Sachs subgraph of $G$. If $S$ is a unique Sachs subgraph of $G$, then every 
cycle in $\mathcal C$ is of odd size. This fact we use repeatedly in the proof of 
the following result which characterizes all simple graphs with a unique Sachs subgraph. 
A family $\mathcal D$ of cycles is {\em independent} if there is no edge joining vertices from
two different cycles in $\mathcal D$. 

\begin{thm}\label{thm:unique-Sachs}
A simple graph $G$ has a unique Sachs subgraph if and only if $G$ can be reduced to 
a family of independent odd cycles by repeatedly deleting pendant edges together with their end-vertices. 
\end{thm}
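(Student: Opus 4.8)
The plan is to prove both directions of the biconditional, with the reduction process (repeatedly deleting pendant edges together with their endpoints) as the organizing idea. Throughout I will use the key fact emphasized just before the statement: if $G$ has a unique Sachs subgraph $S=\mathcal C\cup M\cup L$, then $L=\emptyset$ since $G$ is simple, and every cycle in $\mathcal C$ must be of odd size, because an even cycle could be swapped for either of its two perfect matchings to produce a second Sachs subgraph.

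For the backward direction ($\Leftarrow$), suppose $G$ reduces to a family $\mathcal D$ of independent odd cycles by successively deleting pendant edges with their endpoints. I would argue by induction on the number of such deletions. If $G$ already equals $\mathcal D$, then since the cycles are independent and odd, the only way to span $G$ by $K_2$'s and cycles is to take each odd cycle as a cycle component (an odd cycle has no perfect matching, so it cannot be broken into $K_2$'s, and independence forbids mixing vertices across cycles), giving a unique Sachs subgraph. For the inductive step, let $uv$ be the pendant edge deleted first, with $v$ the degree-one vertex. In any Sachs subgraph of $G$, the vertex $v$ must be matched to its unique neighbor $u$ (it cannot lie on a cycle, having degree one), so $uv\in M$ forcibly; the remaining components form a Sachs subgraph of $G-\{u,v\}$. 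This sets up a bijection between Sachs subgraphs of $G$ and of $G-\{u,v\}$, and by the induction hypothesis the latter has exactly one, so $G$ has exactly one.

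For the forward direction ($\Rightarrow$), suppose $G$ has a unique Sachs subgraph $S=\mathcal C\cup M$. I would run the deletion process and show it terminates in independent odd cycles. The crucial claim is that \emph{if $G$ is not already a disjoint union of cycles, then $G$ has a pendant edge whose deletion (with endpoints) preserves the unique-Sachs-subgraph property}; iterating this claim and noting each step strictly decreases $|V(G)|$ gives termination. To locate a pendant edge, consider a vertex $v$ not on any cycle of $\mathcal C$; such a $v$ is covered by the matching $M$, say by edge $uv$. The main obstacle, and where I would spend the most care, is showing that such a step really does preserve uniqueness and that one can always find a pendant edge to remove rather than getting stuck. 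The natural route is to suppose for contradiction that no pendant edge exists at a stage where $G$ is not a union of cycles: then every vertex has degree at least two, yet $G$ still carries the matching $M$. Using an $M$-alternating path or cycle argument—starting from an $M$-saturated vertex and walking along non-matching then matching edges—I would produce either an $M$-alternating cycle (which, being even, contradicts uniqueness via the swap) or an augmenting-type structure yielding a second Sachs subgraph, again a contradiction. This forces the existence of the desired pendant edge at every non-cycle stage.

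Finally I would verify that the terminal configuration is a family of \emph{independent} odd cycles: the cycles must be odd by the swap argument above, and they must be independent because an edge joining two distinct cycles, or a chord configuration, would let me reroute the matching/cycle structure to build a competing Sachs subgraph. Assembling the two directions completes the equivalence. I expect the hard part to be the forward direction's extraction of a pendant edge—concretely, ruling out the stuck case where all degrees exceed one but $G$ is not purely cyclic—since this is exactly where the uniqueness hypothesis must be leveraged through a careful alternating-structure argument rather than a direct computation.
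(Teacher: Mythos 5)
Your backward direction is complete and correct, and it is essentially the paper's own sufficiency argument made rigorous: the paper asserts the forced containments $M\subseteq S$ and $\mathcal C\subseteq S$ directly, while you obtain the same conclusion by induction via the bijection between Sachs subgraphs of $G$ and of $G-\{u,v\}$ (a degree-one vertex $v$ can only be covered by the $K_2$ on its pendant edge). The forward direction, however, has a genuine gap. The entire weight of the theorem rests on the claim that a graph with a unique Sachs subgraph and no pendant edge must be a disjoint union of cycles, and you do not prove it: you only assert that an alternating-walk argument ``would produce either an $M$-alternating cycle \dots or an augmenting-type structure.'' That sentence is a placeholder for precisely the part of the proof that is hard.

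Two concrete holes. First, in the case where the unique Sachs subgraph is a perfect matching ($\mathcal C=\emptyset$), every vertex is $M$-saturated, so there is no augmenting path in any standard sense; what the argument actually yields (and what the paper constructs) is a \emph{longest} $M$-alternating path $P$ whose two ends $x,y$ have neighbors back on $P$, closing two cycles $C_x$ and $C_y$. Each must be odd (an even one permits the swap), and one then needs a dichotomy: if $C_x\cap C_y=\emptyset$, the competing Sachs subgraph is $(S\setminus E(P))\cup\{C_x,C_y\}\cup M'$ where $M'$ is a perfect matching of $P-V(C_x\cup C_y)$ --- a construction that is neither a cycle swap nor an augmentation; if $C_x\cap C_y\ne\emptyset$, their union contains an even $M$-alternating cycle. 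None of this appears in your sketch. Second, and more seriously, your sketch never addresses the case $\mathcal C\ne\emptyset$: there the vertices on the odd cycles of $\mathcal C$ are not covered by $M$ at all, the alternating path must be grown from a vertex of some $C\in\mathcal C$, and the analysis of its far end $y$ splits into cases ($y$ on a different cycle of $\mathcal C$, $y$ on $C$ itself, or $y$ covered by $M$), each requiring its own replacement Sachs subgraph --- merging two odd cycles and the connecting path into a matching, rerouting $C$ through $P$ into a new odd cycle plus a matching, and so on. These constructions occupy most of the paper's proof of its pendant-edge Claim and are exactly the ``hard part'' you flag but defer; as written, your forward direction is a plan, not a proof.
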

\begin{proof}
\noindent{\em Sufficiency}: Let $G$ be a simple graph which can be reduced to a family of independent odd cycles by
repeating the deletion of pendent edges together with their end-vertices. Let $M$ be the set of all 
edges of $G$ deleted in the process and $\mathcal C$ be the set of remaining odd cycles. Clearly,
$\mathcal C\cup M$ is a Sachs subgraph. On the other hand, for any Sachs subgraph $S$ of $G$,
$M\subseteq S$. Since $\mathcal C$ is the set of independent odd cycles, $\mathcal C\subseteq S$.
So $S=\mathcal C\cup M$. Hence $G$ has a unique Sachs subgraph.\medskip

\noindent{\em Necessary:} Assume that $G$ has a unique Sachs subgraph $S=\mathcal C\cup M\cup L$. 
Then $L=\emptyset$ because $G$ is simple. 
If $G$ consists of 
a family of independent odd cycles, then we are done. Without loss of generality, we may assume that $G$ is connected  but not an odd cycle. First, we establish the following:\medskip

{\bf Claim:} {\sl $G$ has a pendant edge.}\medskip

\noindent{\em Proof of Claim:} Suppose on the contrary that $G$ does not have a pendant edge.

First, suppose that $\mathcal C=\emptyset$. Then $S$ is a perfect matching of $G$. Choose a longest $S$-alternating
path $P$ (edges of $P$ alternate between $S$ and $E(G)\backslash S$). Let $x$ and $y$ be the 
end-vertices of $P$ and $xx', yy'\in E(P)\cap S$. 
Since $P$ is a longest path and $G$ has no pendant edges, both $x$ and $y$ have respective neighbors $x^+$ and $y^+$ in $V(P)\backslash \{x',y'\}$. Let $C_x:=xx'Px^+x$ be the cycle traveling from $x$ to
$x^+$ through the path $P$ then returning to $x$ through the edge $x^+x$. Similarly, $C_y:=yy'Py^+y$.
If $C_x$ has even size, then the symmetric difference $E(C_x)\oplus S$ is another perfect matching 
of $G$, a contradiction to $G$ having a unique Sachs subgraph. Hence $C_x$ is of odd size. So is $C_y$.

\begin{figure}[!hbtp]\refstepcounter{figure}\label{fig:unique-1}
\begin{center}
\includegraphics[scale=1]{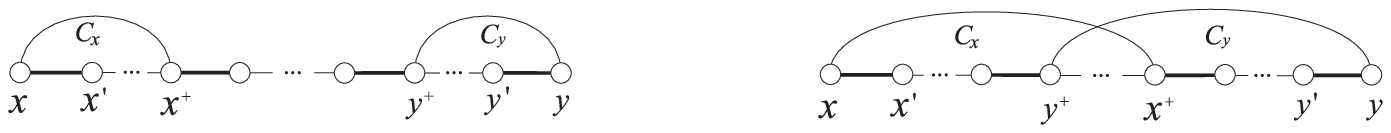}\\
{Figure \ref{fig:unique-1}: The unique Sachs subgraph $S$ is a perfect matching.}
\end{center}
\end{figure}

If $C_x\cap C_y=\emptyset$, then $P-V(C_x\cup C_y)$ has an even number of vertices and 
hence has a perfect matching
$M'$ (see Figure~\ref{fig:unique-1}, Left).  So 
 $(S\backslash E(P)) \cup \{C_x, C_y\}\cup M'$ is another Sachs subgraph of $G$, a contradiction.
So, $C_x\cap C_y\ne \emptyset$ (see Figure~\ref{fig:unique-1}, Right). Let $C:=xPy^+yPx^+x$. 
Since both $C_x$ and $C_y$ have odd sizes, it follows that $C$ is
an $M$-alternating cycle of $G$. Hence the symmetric difference $E(C)\oplus S$ is another perfect matching of $G$,
contradicting that $G$ has a unique Sachs subgraph.

So in the following, assume that $\mathcal C\ne \emptyset$.  Note that every cycle
of $\mathcal C$ has odd sizes. Let $C\in \mathcal C$.  Recall that $S=\mathcal C\cup M$.  Let $x$ be a vertex of $C$ and $P$ be a longest
path starting at $x$ such that the edges of $P$ are alternating between $E(G)\backslash M$ and $M$. 

\begin{figure}[!hbtp]\refstepcounter{figure}\label{fig:unique-2}
\begin{center}
\includegraphics[scale=1.1]{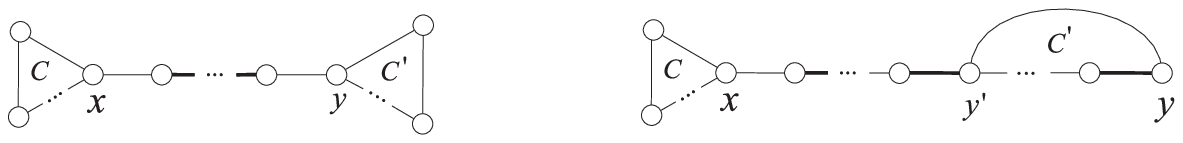}\\
{Figure \ref{fig:unique-2}: The unique Sachs subgraph $S$ contains a cycle $C$.}
\end{center}
\end{figure}

If another end vertex $y$ of $P$ is not covered by the matching $M$, then $y$ is contained in a cycle $C'$ of $\mathcal C$. If  $C'\ne C$, then the graph consisting of $C$, $C'$ and the path $P$ has a perfect matching, denoted by $M'$ (see Figure~\ref{fig:unique-2}, Left). So $G$ has another Sachs subgraph obtained from $S$ by replacing $C$, $C'$ and the edges in $P'\cap M$ by $M'$, a contradiction.
So assume that $C=C'$, then $x,y\in V(C)$. Since $C$ is of odd
size, there is a path $P'$ of $C$ connecting $x$ and $y$ with an odd number of vertices. Then $P'\cup P$ is an odd cycle
and $C-V(P')$ is a path with an even number of vertices. So $G$ has another Sachs subgraph consisting of 
$S\backslash (\{C\}\cup E(P))$, the cycle $P'\cup P$ and a perfect matching of $C-V(P')$, again a contradiction.

So assume that $y$ is covered by the matching $M$ (see Figure~\ref{fig:unique-2}, Right).
Since $G$ has no pendant edges, $y$ has another neighbor $y'$ in $P$. By the same argument as above,
the cycle $C':=y'Pyy'$ consisting of the segment of $P$ from $y'$ to $y$ and the edge $yy'$ is of odd size. 
Let $G'=C\cup P$. Then $G'-V(C')$ has a perfect matching, denoted by $M'$. So $G$ has another Sachs 
subgraph obtained from $S$ by replacing $C$ and $P\cap M$ by $C'$ and $M'$, a contradiction. 
This contradiction completes the proof of our Claim. \qed

\medskip
Now by our Claim, $G$ has a pendant edge $e$. Deleting the edge $e$ together with its end vertices generates 
a subgraph $G_1$ of $G$ which still has a unique Sachs subgraph. Applying the claim on $G_1$, then either $G_1$ 
also contains a pendant edge or else $G_1$ is a family of independent odd cycles. If the former holds, delete
the pendant edge together with its end-vertices. We continue this process until there is no pendant edge 
left. Then the remaining graph consists of independent odd cycles. This completes the proof.
\end{proof}

If $G$ is a bipartite graph with a unique Sachs subgraph, 
then $G$ does not contain odd cycles and hence contains
a pendant edge by Theorem~\ref{thm:unique-Sachs}. So we 
have the following corollary.

\begin{cor}[\cite{SC}, cf.\cite{LP}]
A bipartite graph with a unique perfect matching has a pendant edge.
\end{cor}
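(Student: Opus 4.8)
The plan is to derive this directly from Theorem~\ref{thm:unique-Sachs} by first translating the hypothesis into the language of Sachs subgraphs. First I would note that a perfect matching is itself a Sachs subgraph (every component is a $K_2$), so a graph $G$ with a perfect matching $M$ always has at least one Sachs subgraph. The crucial reduction is to show that, for bipartite $G$, having a \emph{unique} perfect matching is equivalent to having a \emph{unique} Sachs subgraph.

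To establish this, let $S=\mathcal C\cup M'\cup L$ be any Sachs subgraph of $G$. Since $G$ is simple we have $L=\emptyset$, and since $G$ is bipartite every cycle in $\mathcal C$ has even length. If $\mathcal C$ were nonempty, I would pick a cycle $C\in\mathcal C$; because $C$ is even, its edges split into two disjoint perfect matchings of $C$, and replacing $C$ in $S$ by either of these yields two distinct perfect matchings of $G$ (they agree off $C$ but differ on $C$). This contradicts the uniqueness of the perfect matching, so $\mathcal C=\emptyset$ and $S=M'$ is a perfect matching, whence $S=M$. Thus the unique perfect matching $M$ is the unique Sachs subgraph of $G$.

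Next I would apply Theorem~\ref{thm:unique-Sachs}: since $G$ has a unique Sachs subgraph, it can be reduced to a family of independent odd cycles by repeatedly deleting pendant edges together with their end-vertices. But $G$ is bipartite and therefore contains no odd cycle, so the terminal family of independent odd cycles must be empty. Since $G$ possesses a perfect matching it is nonempty (indeed it has no isolated vertices), so the reduction cannot terminate without performing any deletion; its very first step must remove a pendant edge. Therefore $G$ has a pendant edge, as claimed.

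The only real content lies in the equivalence of the second paragraph; once the hypothesis is rephrased as ``unique Sachs subgraph,'' the conclusion is immediate from the structure theorem, with bipartiteness precisely eliminating the odd-cycle residue. I do not expect a genuine obstacle here, since the even-cycle-swapping device is exactly the one already exploited in the proof of Theorem~\ref{thm:unique-Sachs}; the main point is simply to record that, in the bipartite setting, uniqueness of the perfect matching forces $\mathcal C=\emptyset$ in every Sachs subgraph.
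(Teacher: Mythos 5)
Your proposal is correct and takes essentially the same route as the paper, which likewise derives the corollary from Theorem~\ref{thm:unique-Sachs} by noting that bipartiteness forces the terminal family of independent odd cycles to be empty; the paper simply treats ``unique perfect matching'' and ``unique Sachs subgraph'' as interchangeable for bipartite graphs, which is exactly the equivalence you verify explicitly. One small imprecision: if $|\mathcal C|\ge 2$, replacing the single cycle $C$ by one of its matchings produces another Sachs subgraph, not yet a perfect matching of $G$, so you should replace \emph{every} cycle of $\mathcal C$ by one of its two matchings (varying the choice on $C$) to obtain the two distinct perfect matchings that contradict uniqueness.
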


For signed graphs with a unique Sachs subgraph, the simply invertible property
implies that the inverse weight function is integral (also call {\em integral inverse}) as 
described in the following theorem, which also generalizes and extends Theorem 2.1 
in \cite{MM} for bipartite graphs with a unique perfect matching.

\begin{thm}\label{thm:unique}
Let $(G,\sigma)$ be a simple signed graph with a unique Sachs subgraph $S$. Then:

$(1)$  $(G, \sigma)$ has an integral inverse if and only if $S$ is a perfect matching;

$(2)$ if $(G,\sigma)$ has a simple inverse, then $S$ is a perfect matching.
\end{thm}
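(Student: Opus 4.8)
The plan is to work directly with the structural characterization from Theorem~\ref{thm:unique-Sachs} together with the determinant and inverse formulas of Theorem~\ref{thm:weight-det} and Theorem~\ref{thm:weight-inverse}, and to prove the contrapositive of the nontrivial directions. The key observation is that, by Theorem~\ref{thm:unique-Sachs}, a simple graph $G$ with a unique Sachs subgraph reduces to a family of independent odd cycles by repeatedly deleting pendant edges; so $S$ is a perfect matching precisely when this residual family of odd cycles is empty, i.e. $\mathcal C=\emptyset$ in the unique Sachs subgraph $S=\mathcal C\cup M$ (recall $L=\emptyset$ since $G$ is simple). Thus both parts of the theorem amount to showing: \emph{if $\mathcal C\neq\emptyset$, then $(G,\sigma)$ has a non-integral inverse, and in particular its inverse has a loop.}

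For part $(1)$, I would first dispose of the easy direction. If $S=\mathcal C\cup M$ is a perfect matching, then by Corollary~\ref{cor-sign} we have $\det(\mathbb A)=(-1)^{|M|}=\pm 1$. Every entry of $\mathbb A^{-1}$ is then $\pm\det(\mathbb A_{(ij)})/\det(\mathbb A)$ by Cramer's rule, and since $\mathbb A$ is an integer matrix each cofactor $\det(\mathbb A_{(ij)})$ is an integer; dividing by a unit $\pm 1$ keeps everything integral, so $(G,\sigma)$ has an integral inverse. For the converse, I would assume $\mathcal C\neq\emptyset$ and exhibit a diagonal entry of $\mathbb A^{-1}$ that is not an integer. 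By Theorem~\ref{thm:weight-inverse}, $w^{-1}(ii)=\det(\mathbb A_{(ii)})/\det(\mathbb A)$, and $\det(\mathbb A_{(ii)})=\det(\mathbb A-i)$ counts the Sachs subgraphs of $G-i$. The strategy is to pick $i$ to be a vertex on one of the residual odd cycles $C$ (a cycle surviving the pendant-deletion process). Deleting such a vertex breaks $C$ into an even path, which then has exactly one perfect matching completing the rest of $S\setminus\{C\}$; so $G-i$ has a Sachs subgraph, giving $\det(\mathbb A_{(ii)})\neq 0$. The point is then to show $|\det(\mathbb A)|>1$ when $\mathcal C\neq\emptyset$: by Theorem~\ref{thm:weight-det}, with $S$ the unique Sachs subgraph, $\det(\mathbb A)=2^{|\mathcal C|}\,\sigma(\mathcal C)\,(-1)^{|\mathcal C|+|E(S)|}$, whose absolute value is $2^{|\mathcal C|}\ge 2$. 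Combining $\det(\mathbb A_{(ii)})\neq 0$ with $|\det(\mathbb A)|=2^{|\mathcal C|}$ forces $w^{-1}(ii)$ to be non-integral for a suitable choice of $i$, provided the chosen cofactor is not itself divisible by $2^{|\mathcal C|}$; arranging this divisibility obstruction cleanly is where the care is needed.

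Part $(2)$ follows by the same determinant computation but is logically cleaner, so I would present it as an immediate consequence. If $(G,\sigma)$ has a simple inverse, then by Proposition~\ref{prop-2} the graph $(G-i,\sigma)$ is not invertible for every vertex $i$, i.e. $\det(\mathbb A_{(ii)})=0$ for all $i$. But if $\mathcal C\neq\emptyset$, choosing $i$ on a residual odd cycle produces a Sachs subgraph of $G-i$ as above, whence $\det(\mathbb A_{(ii)})\neq 0$, a contradiction. Therefore $\mathcal C=\emptyset$ and $S$ is a perfect matching.

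The main obstacle I anticipate is the converse half of part $(1)$: producing a \emph{specific} non-integral entry rather than merely arguing that integrality is unlikely. Since $|\det(\mathbb A)|=2^{|\mathcal C|}$, integrality of $\mathbb A^{-1}$ would require every cofactor $\det(\mathbb A_{(ii)})$ to be divisible by $2^{|\mathcal C|}$, and one must rule this out. The cleanest route is to observe that $\det(\mathbb A_{(ii)})$ for $i$ on an odd cycle $C$ counts Sachs subgraphs of $G-i$, which by Theorem~\ref{thm:weight-inverse} and the structure of $G$ come from the remaining cycles in $\mathcal C\setminus\{C\}$ — contributing a factor $2^{|\mathcal C|-1}$, not $2^{|\mathcal C|}$. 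Thus $\det(\mathbb A_{(ii)})/\det(\mathbb A)$ carries a residual factor of $1/2$, establishing non-integrality and in fact that $w^{-1}(ii)\neq 0$, so the inverse genuinely has a loop. Verifying that no further cancellation restores integrality — i.e. that the odd length of $C$ prevents an extra factor of $2$ — is the delicate point and is exactly where the hypothesis that every residual cycle is \emph{odd} (from Theorem~\ref{thm:unique-Sachs}) does the essential work.
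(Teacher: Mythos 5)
Your overall strategy tracks the paper's: reduce both parts to the case $\mathcal C\neq\emptyset$ in the unique Sachs subgraph $S=\mathcal C\cup M$, pick a vertex $i$ on a residual odd cycle $C$, and compare the cofactor $\det(\mathbb A_{(ii)})$ with $\det(\mathbb A)=\pm 2^{|\mathcal C|}$; the easy direction of (1) is also handled exactly as in the paper. But there is a genuine gap at the decisive step, and it infects both parts. In part (2) you infer $\det(\mathbb A_{(ii)})\neq 0$ from the mere \emph{existence} of a Sachs subgraph of $G-i$. That inference is false in general: the determinant is a signed sum over \emph{all} Sachs subgraphs, and distinct Sachs subgraphs can cancel. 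For instance $C_4$ has three Sachs subgraphs (two perfect matchings and the cycle itself), contributing $1+1-2=0$, so its adjacency matrix is singular despite having Sachs subgraphs. In part (1) you correctly see that you must pin down the exact power of $2$ in the cofactor and rule out cancellation, but you explicitly leave this ``delicate point'' open; as written, the claim $|(\mathbb A^{-1})_{ii}|=1/2$ is not established, and the proof is incomplete precisely where it matters.

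The missing idea --- and it is how the paper closes both gaps with one stroke --- is to apply Theorem~\ref{thm:unique-Sachs} not only to $G$ but to $G-i$ as well. Since $G$ reduces by pendant-edge deletions to the independent odd cycles $\mathcal C$, and $i$ lies on $C\in\mathcal C$, the same sequence of deletions is available in $G-i$, after which the even path $C-i$ is itself consumed by further pendant-edge deletions; hence $G-i$ reduces to the independent odd cycles $\mathcal C\setminus\{C\}$, so by the sufficiency direction of Theorem~\ref{thm:unique-Sachs} the graph $G-i$ has a \emph{unique} Sachs subgraph, with cycle part $\mathcal C\setminus\{C\}$. Uniqueness simultaneously kills cancellation and fixes the power of $2$: Theorem~\ref{thm:weight-det} then gives $|\det(\mathbb A_{(ii)})|=2^{|\mathcal C|-1}$ exactly, hence $|(\mathbb A^{-1})_{ii}|=2^{|\mathcal C|-1}/2^{|\mathcal C|}=1/2$, which is nonzero (so the inverse has a loop, settling (2) via Proposition~\ref{prop-2}) and non-integral (settling the converse of (1)). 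Note also that the oddness of the residual cycles is not by itself what blocks an ``extra factor of $2$,'' as you suggest; its role is to make Theorem~\ref{thm:unique-Sachs} applicable and to make $C-i$ an even path, and it is the resulting uniqueness of the Sachs subgraph of $G-i$ that does the work.
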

\begin{proof} 
Let $S=\mathcal C\cup M\cup L$ be the unique Sachs subgraph of $(G,\sigma)$. Since
$(G,w)$ is simple, $L=\emptyset$.
Then by Theorem~\ref{thm:weight-det}, it follows that
\[\det(\mathbb A)
=\sigma(\mathcal C)\sigma^2(M)2^{|\mathcal C|} (-1)^{|\mathcal C|+|E(S)|}
=\sigma(\mathcal C)2^{|\mathcal C|} (-1)^{|\mathcal C|+|E(S)|}.\]
Note that every cycle in $\mathcal C$ is of odd size.

(1) First, assume that $S$ is a perfect matching. Then $\mathcal C=\emptyset$ and hence 
$\det(\mathbb A)=(-1)^{|M|}$. 
By Theorem~\ref{thm:weight-inverse},  $(G,\sigma)$ has an integral inverse. 

For the other direction, assume that $(G,\sigma)$ has an integral inverse. Suppose on the contrary 
that $\mathcal C\ne \emptyset$. Let $C$ be a cycle of $\mathcal C$.
 Then $C$ is of odd size. 
Note that, for any vertex $i\in V(C)$, $C$ has a maximum
matching $M_C$ which covers all vertices of $C$ except $i$. By Theorem~\ref{thm:unique-Sachs},
the graph $G-i$, obtained from $G$ by deleting the vertex $i$, still has a unique Sachs subgraph
$S'$. Note that $S$ contains one more cycle (the cycle $C$) than $S'$. By
Theorem~\ref{thm:weight-inverse}, $|(\mathbb A^{-1})_{ii}|=1/2$, which contradicts that 
$(G,\sigma)$ has an integral inverse.  \medskip

(2)  It suffices to show that  $\mathcal C= \emptyset$. If not, let $C\in \mathcal C$. Then
$C$ is an odd cycle. Let $i$ be a vertex of  $C$. By a similar argument as above, we have 
$(\mathbb A^{-1})_{ii}\ne 0$. So the inverse of $(G,\sigma)$ is not simple, a contradiction. 
This completes the proof.
\end{proof}

\noindent{\bf Remark.} The other direction of (2) in the above theorem does not always hold. For example, the graph $G$ in
Figure~\ref{fig:triangle}: $G-w$ is invertible because $G-w$ has a unique Sachs subgraph.

\begin{figure}[!hbtp]\refstepcounter{figure}\label{fig:triangle}
\begin{center}
\includegraphics[scale=.8]{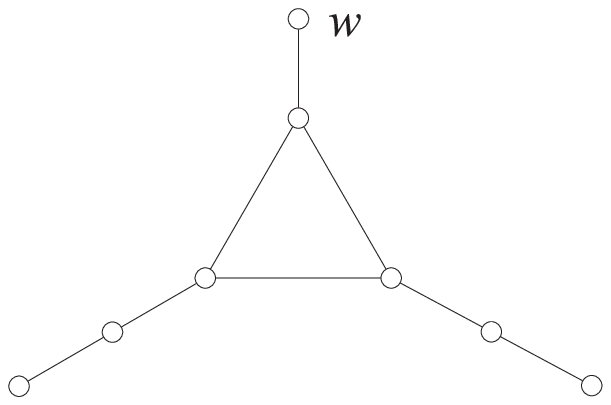}\\
{Figure \ref{fig:triangle}: The graph $G$ with a unique perfect matching but without simple inverse.}
\end{center}
\end{figure}

%\begin{prob}
%Which signed graph $(G,\sigma)$ has an simple inverse which is also a signed graph?
%\end{prob}

%If a graph $G$ has a perfect matching $M$, a path $P$ is {\em $M$-alternating}
%if $E(P)\cap M$ is a perfect matching of $P$.

%\begin{lemma}[Not TRUE: a square plus two pendent edges]
%Let $(G,\sigma)$ be a signed graph with a unique Sachs subgraph. If $G$ is simply invertible,
%then two vertices $i$ and $j$ are adjacency in $G^{-1}$ if there are an $M$-alternating path joining them.
%\end{lemma}
%\begin{proof}
%Note that a perfect matching is a Sachs subgraph. So $G$ has a unique perfect matching. 
%Since $G$ has a unique Sachs subgraph which is a perfect matching, for any two vertices $i$ and $j$,
%there is a unique $M$-alternating path $P_{ij}$ joining them, in other words, 
%\end{proof}

In the following, we consider two important families of graphs, one is called {\em stellated graphs} \cite{KL, JJS}
and the other is called {\em corona graphs} \cite{MM}. Let $G$ be a graph. 
The {\em stellated graph} of $G$, denoted by $\mbox{st}(G)$, is the line graph of the subdivision of $G$ obtained from $G$ by subdividing every edge once (see Figure~\ref{fig:stellated}). The stellated graph of $G$ is also called
inflated graph  \cite{OF98} or para-line graph of $G$ \cite{TS}. A graph $G$ is called a {\em stellated} graph if $G=\mbox{st}(H)$ for some
graph $H$. 
The spectrum of stellated graphs (or para-line graphs) have been studied in \cite{TS}. For chemical applications of the stellated graphs of trees, refer to \cite{KL}.

\begin{figure}[!hbtp]\refstepcounter{figure}\label{fig:stellated}
\begin{center}
\includegraphics[scale=1]{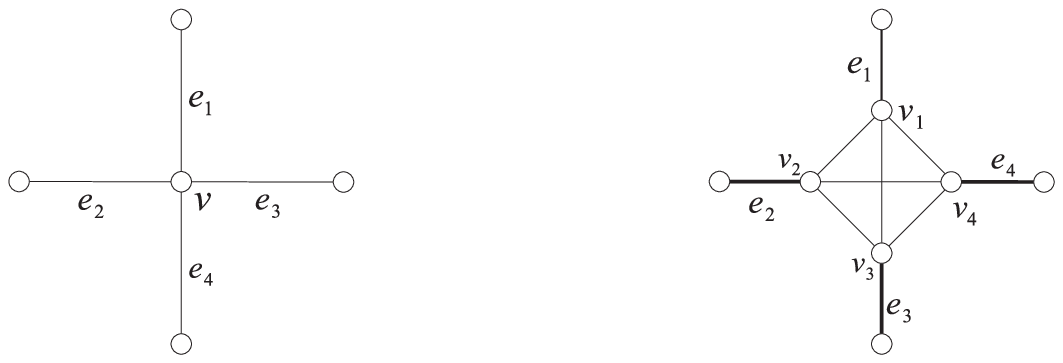}\\
{Figure \ref{fig:triangle}: The graph $K_{1,4}$ (left) and its stellated graph $\mbox{st}(K_{1,4})$ (right).}
\end{center}
\end{figure}

\begin{lemma}\label{lem:stellated}
Let $G$ be a connected simple graph with at least one edge. Then its stellated graph $\mbox{st}(G)$ has a perfect matching, and $\mbox{st}(G)$ has a unique perfect matching if and only if $G$ is a tree.
\end{lemma}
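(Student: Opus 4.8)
The plan is to read off the combinatorial structure of $\mathrm{st}(G)$ and then reduce perfect matchings to even subgraphs (elements of the cycle space) of $G$. First I would set up notation: since $\mathrm{st}(G)$ is the line graph of the subdivision $S(G)$, its vertices are the edges of $S(G)$, and each original edge $e=uv$ of $G$ splits into two such vertices, which I denote $(e,u)$ and $(e,v)$ (the two halves of $e$ meeting at $u$ and at $v$). Two vertices of $\mathrm{st}(G)$ are adjacent precisely when the corresponding half-edges share an endpoint in $S(G)$, so there are exactly two kinds of edges: the \emph{subdivision edges} $(e,u)(e,v)$ (sharing the subdivision vertex of $e$), and the \emph{clique edges} $(e,v)(e',v)$ for distinct $e,e'$ incident to a common vertex $v$. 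Thus the half-edges at $v$, namely $Q_v:=\{(e,v):e\ni v\}$, induce a clique $K_{\deg_G(v)}$, and the subdivision edges form a matching $M_0$ pairing the two halves of each edge. Observing that $M_0$ covers every vertex $(e,v)$ of $\mathrm{st}(G)$ immediately gives the first claim: $\mathrm{st}(G)$ has a perfect matching (here I only need that $G$ has an edge, so that $\mathrm{st}(G)$ is nonempty).

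For the uniqueness statement I would parametrize all perfect matchings of $\mathrm{st}(G)$. Given a perfect matching $M$, let $F=F(M)\subseteq E(G)$ be the set of edges $e$ whose subdivision edge does \emph{not} lie in $M$. Each half-edge $(e,v)$ with $e\notin F$ is matched by its subdivision edge, while the half-edges $(e,v)$ with $e\in F$ must be matched \emph{inside} the clique $Q_v$; hence at every vertex $v$ the number of edges of $F$ incident to $v$ is \emph{even}, i.e.\ $F$ is an even subgraph of $G$. Conversely, since each $Q_v$ is complete, any even subgraph $F$ extends to a perfect matching: take the subdivision edges of $E(G)\setminus F$ together with an arbitrary perfect matching of the even-order clique on $\{(e,v):e\in F,\ e\ni v\}$ at each vertex. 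These choices at distinct vertices are independent because every half-edge lies in a unique clique $Q_v$. This shows that perfect matchings of $\mathrm{st}(G)$ correspond to even subgraphs of $G$ (decorated with clique matchings), with $F=\emptyset$ giving exactly $M_0$.

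From this correspondence the equivalence is immediate. If $G$ is a tree, its cycle space is trivial, so $F=\emptyset$ is the only even subgraph and the unique perfect matching is $M_0$. If $G$ is connected but not a tree, it contains a cycle $C$; then $F=E(C)$ is a nonempty even subgraph, and completing it yields a perfect matching that omits the subdivision edges of $C$ and is therefore distinct from $M_0$, so the perfect matching is not unique. Since $G$ is connected, ``$G$ has no nonempty even subgraph'' is equivalent to ``$G$ is a tree'', which closes the argument.

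I expect the main obstacle to be justifying the parametrization cleanly rather than any single computation: specifically, verifying that the intra-clique matching choices at distinct vertices never conflict (so that every even subgraph genuinely extends to a perfect matching) and that a nonempty $F$ always produces a matching different from $M_0$. A convenient sanity check, and an alternative route for the tree direction, is the observation that a leaf $\ell$ of $G$ produces a degree-one vertex $(e,\ell)$ in $\mathrm{st}(G)$ whose only neighbor is $(e,v)$; this forces the subdivision edge of $e$ into every perfect matching and lets one peel off leaves inductively, recovering uniqueness for trees in the spirit of Theorem~\ref{thm:unique-Sachs}.
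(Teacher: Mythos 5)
Your proof is correct, and while the existence half coincides with the paper's (both exhibit the perfect matching formed by the pairs of half-edges coming from each edge of $G$), your uniqueness argument takes a genuinely different route. The paper contracts the cliques $Q_v$ back to the vertices of $G$ and invokes the matching-theoretic criterion that a perfect matching $M$ is unique if and only if there is no $M$-alternating cycle, asserting that $M$-alternating cycles of $\mathrm{st}(G)$ correspond to cycles of $G$; uniqueness for trees and non-uniqueness otherwise then follow in one line. You instead classify \emph{all} perfect matchings of $\mathrm{st}(G)$: each one determines the even subgraph $F\subseteq E(G)$ of edges whose subdivision edge it omits, and conversely every even subgraph extends, independently clique by clique, to a perfect matching, so uniqueness is exactly triviality of the cycle space of $G$. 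The paper's route is shorter but leaves its key claim to the reader, and that claim needs some care: an alternating cycle of $\mathrm{st}(G)$ projects a priori only to a closed walk of $G$ without immediate backtracking, and one must still argue that such walks exist precisely when $G$ contains a cycle. Your parametrization is self-contained (no appeal to the alternating-cycle criterion), makes the independence of the intra-clique choices explicit, and proves strictly more --- an explicit description of every perfect matching of $\mathrm{st}(G)$ --- at the cost of extra bookkeeping. Your closing remark about peeling off leaves is also sound and essentially reproduces, for the tree direction, the pendant-edge mechanism of Theorem~\ref{thm:unique-Sachs}.
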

\begin{proof}
Let $G$ be a connected simple graph with at least one edge, and $\mbox{st}(G)$ the stellated graph of $G$.
For a vertex $v$ of $G$, assume its degree is $d(v)$. Let $E_v=\{e_1,e_2,...,e_{d(v)}\}$ be the set of edges incident with
$v$. 
Then 
$\mbox{st}(G)$ can be treated as a graph obtained
from $G$ by replacing every vertex $v$ by a clique consisting of $v_1, v_2, ..., v_{d(v)}$ such that
for an edge $uv\in E(G)$, joining $v_i$ and $u_j$ if $uv=e_i\in E_v$ and $uv=e'_j\in E_u$. 
%For each edge $e_i$, let $v_i$ be a vertex corresponding to it. 
So $M=\{v_iu_j| v_iu_j\in E(\mbox{st}(G))\mbox{ and }uv\in E(G)\}$ is a perfect matching of $\mbox{st}(G)$ (for example thick edges in $\mbox{st}(K_{1,4})$ in Figure~\ref{fig:stellated}).

On the other hand, $G$ is obtained from $\mbox{st}(G)$ by contracting these maximal clique to a 
vertex of $G$. Let $M$ be a perfect matching of $\mbox{st}(G)$. Note that an $M$-alternating 
cycle of $\mbox{st}(G)$  corresponds to a cycle of $G$. So $\mbox{st}(G)$
has a unique perfect matching $M$ if and only if $\mbox{st}(G)$ has no $M$-alternating cycles. 
Hence $G$ has no cycles. So $G$ is a tree.
\end{proof}
 
By Lemma~\ref{lem:stellated}, a stellated graph without isolated vertices always has a perfect matching.
So if a stellated graph has a unique Sachs subgraph, then it has a unique perfect matching. On
the other hand, if a stellated graph has a unique perfect matching, then it is a stellated graph of 
a tree and hence has a unique Sachs subgraph.
%So  a stellated graph with a unique Sachs subgraph has a unique perfect
%matching.
%If $G$ is a connected stellated graph with a unique perfect matching $M$, then $G$ is the stellated graph %of a tree
%$T$. 
Let $T$ be a tree. For any two vertices of $T$, there is exactly one path joining them.
For any two vertices $i$ and $j$ of $\mbox{st}(T)$, there is at most one 
$M$-alternating path $P$ joining $i$
and $j$ because $P/(E(P)\backslash M)$ is a path in $T$. If $i$ and $j$ are joined 
by an $M$-alternating path $P_{ij}$, let $\tau(i,j):=|E(P_{ij})\backslash M|$. 
Now, we proceed to consider the inverse of signed stellated graphs with a unique Sachs subgraph
(equivalently, a unique perfect matching).

\begin{figure}[!hbtp]\refstepcounter{figure}\label{fig:inverse-stellated}
\begin{center}
\includegraphics[scale=1]{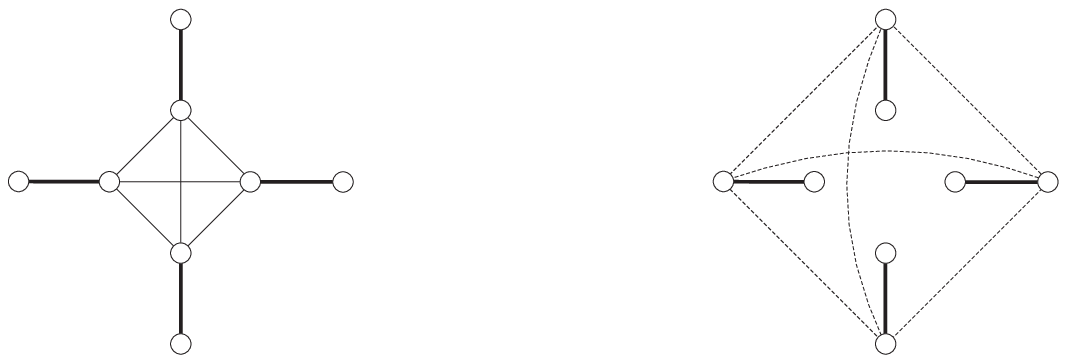}\\
{Figure \ref{fig:inverse-stellated}: The stellated graph $K_{1,4}$ (left) and its inverse (right): dashed edges
has weight $-1$, solid edges have weight 1, and thick edges form a perfect matching.}
\end{center}
\end{figure}

\begin{thm}\label{thm:stellated-inverse}
Let $(G,\sigma)$ be a signed stellated graph with a unique perfect matching $M$. Then $(G,\sigma)$ 
has an inverse $(G^{-1}, \sigma^{-1})$ which is a signed graph such that, for any two vertices $i$ and $j$, 

(1) $ij\in E(G^{-1})$ if and only if 
there is an $M$-alternating path $P_{ij}$ joining $i$ and $j$ in $(G,\sigma)$; and

(2) $\sigma^{-1}(ij)=(-1)^{\tau(i,j)}\sigma(P_{ij})$.
\end{thm}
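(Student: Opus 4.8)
The plan is to feed the special structure of $G=\mbox{st}(T)$ into the general inverse formula of Theorem~\ref{thm:weight-inverse}. First I would fix the setting: by Lemma~\ref{lem:stellated} and the discussion after it, a signed stellated graph with a unique perfect matching is $(\mbox{st}(T),\sigma)$ for a tree $T$, and its perfect matching $M$ is its \emph{unique} Sachs subgraph (so $\mathcal C=L=\emptyset$ for $G$). I would use the clique picture from Lemma~\ref{lem:stellated}: each vertex $v$ of $T$ becomes a clique $Q_v$ of order $d(v)$, each edge of $T$ becomes a single link edge joining two cliques, and $M$ is exactly the set of link edges. By Corollary~\ref{cor-sign}, $\det(\mathbb A)=(-1)^{|M|}$. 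Reading off Theorem~\ref{thm:weight-inverse}, computing $\sigma^{-1}$ reduces to describing, for each $i\ne j$, the family $\mathcal P_{ij}$ of paths $P$ from $i$ to $j$ with $G-V(P)$ admitting a Sachs subgraph (and those subgraphs), and to evaluating $\det(\mathbb A_{(ii)})$, i.e. the Sachs subgraphs of $G-i$, on the diagonal.

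The organizing claim I would establish is: for a path $P$ of $G$ (a single vertex allowed), $G-V(P)$ has a Sachs subgraph if and only if the edges of $M$ with both ends on $P$ form a perfect matching of $V(P)$ — equivalently, $P$ is $M$-alternating with its first and last edges in $M$ — and in that case the Sachs subgraph is \emph{unique} and equals $M\setminus E(P)$. Two parts are immediate from the global uniqueness of the Sachs subgraph of $G$: if $P$ is of this type, then $M\setminus E(P)$ is a perfect matching of $G-V(P)$; and if $S'$ is any Sachs subgraph of $G-V(P)$, then $(M\cap E(P))\cup S'$ is a Sachs subgraph of $G$, hence equals $M$, forcing $S'=M\setminus E(P)$. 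The forward direction splits into showing that $S'$ has no cycle and that parity then forces the stated form; the latter is short, since once $S'$ perfectly matches $G-V(P)$, the order of $G-V(P)$ is even, $|V(P)|$ is even, and the unique perfect matching $N$ of the path $P$ satisfies $N\cup S'=M$, so $N=M\cap E(P)$ covers $V(P)$ internally and its edges include the two end-edges of $P$.

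The main obstacle is proving that no Sachs subgraph $S'$ of $G-V(P)$ contains a cycle, and here I would use the tree twice. First, every cycle of $\mbox{st}(T)$ lies inside a single clique $Q_v$: projecting a cycle to $T$ (link edges to tree-edges, clique edges to their clique-vertex) gives a closed walk, and since each tree-edge carries only one link edge while in a tree every edge of a closed walk is traversed an even number of times, no link edge can appear. Now suppose $S'$ had a cycle; among the cliques carrying an $S'$-cycle I would pick one, $Q_v$, extremal in $T$ (a leaf of the Steiner tree of all such cliques), so that all but at most one branch of $T$ at $v$ is free of $S'$-cycles. A cycle $C\subseteq Q_v$ meets at least three vertices, hence at least three branches hang at $v$, of which at least two, say $B_1,B_2$, are cycle-free. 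Each branch $B_t$ is attached to the rest of $G$ only through the single link edge at its cycle-vertex $x_t\in V(C)$, and $x_t\notin V(P)$ since $x_t\in V(S')$; as $P$ is connected and avoids $x_t$, either $V(P)\subseteq B_t$ or $V(P)\cap B_t=\emptyset$. But $|V(B_t)|$ is odd (the matching $M$ covers $B_t$ except for its top vertex), and in a cycle-free branch $S'$ matches $V(B_t)\setminus V(P)$ perfectly, so $|V(B_t)\cap V(P)|$ is odd, hence nonempty, forcing $V(P)\subseteq B_t$. Applied to the two disjoint branches $B_1,B_2$ this gives $V(P)\subseteq B_1\cap B_2=\emptyset$, a contradiction.

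Finally I would assemble the theorem. For the diagonal, $G-i$ has odd order and, by the no-cycle claim, no cyclic and hence no Sachs subgraph, so $\det(\mathbb A_{(ii)})=0$ and $\sigma^{-1}(ii)=0$; thus $G^{-1}$ is loopless and, as shown below, $\pm1$-valued, so it is a genuine signed graph. For $i\ne j$, the claim shows $\mathcal P_{ij}$ is empty unless there is an $M$-alternating path with end-edges in $M$, and such a path is unique (its contraction along non-matching edges is the unique $i$–$j$ path of $T$). Hence the outer sum of Theorem~\ref{thm:weight-inverse} has at most one term, over $P=P_{ij}$, and the inner sum has the single Sachs subgraph $S=M\setminus E(P)$ with $\mathcal C=L=\emptyset$. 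Substituting, using $\sigma^2\equiv 1$, the edge count $|E(S)\cup E(P)|=(|M|-|M\cap E(P)|)+|E(P)|=|M|+\tau(i,j)$ (since $|M\cap E(P)|=|E(P)|-\tau(i,j)$), and $\det(\mathbb A)=(-1)^{|M|}$, everything collapses to
\[
\sigma^{-1}(ij)=\frac{1}{(-1)^{|M|}}\,\sigma(P_{ij})\,(-1)^{|M|+\tau(i,j)}=(-1)^{\tau(i,j)}\,\sigma(P_{ij}),
\]
while $\sigma^{-1}(ij)=0$ when no such path exists. This yields both (1) and (2).
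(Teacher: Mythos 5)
Your proof is correct and follows essentially the same route as the paper's: both feed the structure of $\mbox{st}(T)$ into Theorem~\ref{thm:weight-inverse}, use the clique picture to show that every cycle of $\mbox{st}(T)$ lies inside a single clique (hence Sachs subgraphs of $G-V(P)$ are acyclic, branches have odd order, and $G-i$ has no Sachs subgraph), and exploit the uniqueness of the $M$-alternating path $P_{ij}$ (via contraction to the tree $T$) to collapse the sum to the single term $\sigma(P_{ij})(-1)^{\tau(i,j)}$. The only difference is one of rigor: your Steiner-tree/extremal-branch argument proves the acyclicity claim carefully for an arbitrary path $P$, whereas the paper asserts it tersely, justifying it essentially only for single-vertex deletions via the remark that $G-V(C)$ has at least $|C|$ components without perfect matchings.
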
 
\begin{proof}
Let $(G,\sigma)$ be a signed stellated graph with a unique perfect matching. By the above argument,
$M$ is the unique Sachs subgraph of $(G,\sigma)$. 
Let $\mathbb A$ be the adjacency matrix of $(G,\sigma)$. Then by Corollary~\ref{cor-sign}, $\det(\mathbb A)=(-1)^{|M|}$. Hence $(G,\sigma)$ has inverse $(G^{-1},\sigma^{-1})$.

By Lemma~\ref{lem:stellated}, $G$ is the stellated graph of a tree $T$. 
So every cycle $C$ of $G$ is contained in a maximal clique corresponding to a vertex of $T$.
Note that $G-V(C)$ has at least $|C|$ components each without a perfect matching. 
Hence, for any vertex $i$ of  $G$, a Sachs subgraph of $G-i$ contains no 
cycles and hence is a perfect matching. 
However, $G-i$ has an odd number of vertices. Hence $G-i$ has no Sachs subgraph.
So  $(G^{-1},\sigma^{-1})$ is simple.

Let $P$ be a path joining two vertices $i$ and $j$. Then $G-V(P)$ has a
Sachs subgraph if and only if $P$ is an $M$-alternating path.  
By Theorem~\ref{thm:weight-inverse}, 
$(\mathbb A^{-1})_{ij}\ne 0$ if and only if there is an $M$-alternating path joining $i$ and $j$. 
It follows immediately that $i$ and $j$ are adjacent in $G^{-1}$ if
and only if there is an $M$-alternating path $P_{ij}$ joining them.

Note that, for any two vertices $i$ and $j$ of $G$, there is 
at most one $M$-alternating path $P_{ij}$ joining $i$ and $j$. 
So if $ij\in E(G^{-1})$, by Theorem~\ref{thm:weight-inverse},
$\sigma^{-1}(ij)=(\mathbb A^{-1})_{ij}=\sigma(P_{ij}) (-1)^{\tau(i,j)}$. This completes the proof.
\end{proof}

The {\em corona} of a graph $H$ is a graph $G$ obtained from $H$ by 
adding a neighbor of degree 1 to each vertex
of $H$ (\cite{SC}). A graph $G$ is a {\em corona graph} if it is the corona of some graph.
A corona graph has an even number of vertices and half of them have degree 1. So
a corona graph has a unique Sachs subgraph that is a perfect matching.  
The inverse of a bipartite
corona graph has been discussed in \cite{BG, MM,SC}.
A weighted graph $(G,w)$ is {\em self-invertible} if it has an inverse $(G^{-1},w^{-1})$ 
such that $G^{-1}$ is isomorphic to $G$. Note that, their
weight-functions $w$ and $w^{-1}$ may be different.
Based on Godsil's definition of inverse, Simoin and Cao \cite{SC} show that if 
$G$ is a bipartite graph with a unique perfect matching $M$ such that $G/M$ is 
bipartite, then $G$ is self-invertible if and only if $G$ is isomorphic to a 
bipartite corona graph (see also~\cite{BG}). The self-invertibility of a corona bipartite graphs  
is also verified for McLeman-McNicholas's definition \cite{MM}.
This result is partially generalized to non-bipartite signed graphs as follows.

\begin{thm}\label{thm:corona}
Let $(G,\sigma)$ be a simple signed corona graph.
Then $(G,\sigma)$ is self-invertible.
\end{thm}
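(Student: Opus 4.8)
The plan is to exploit the block structure of the adjacency matrix forced by the corona construction and invert it explicitly. Write $G$ as the corona of a graph $H$ on vertices $1,\dots,n$, where each $i$ carries a pendant neighbor $i'$; the pendant edges $ii'$ form the unique perfect matching $M$, which (as noted just before the statement) is the unique Sachs subgraph of $(G,\sigma)$. By Corollary~\ref{cor-sign} we then have $\det(\mathbb A)=(-1)^{|M|}\neq 0$, so $(G,\sigma)$ is invertible by Proposition~\ref{prop-1}, and it remains only to identify its inverse.

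Ordering the vertices as $1,\dots,n,1',\dots,n'$, the adjacency matrix takes the block form $\mathbb A=\left(\begin{smallmatrix} B & D \\ D & 0\end{smallmatrix}\right)$, where $B$ is the $n\times n$ signed adjacency matrix of $H$ and $D=\mathrm{diag}(d_1,\dots,d_n)$ with $d_i=\sigma(ii')\in\{-1,+1\}$. The zero block records that distinct pendants are non-adjacent, and the diagonal block $D$ records the matching. Since $D$ is nonsingular with $D^{-1}=D$, I would solve the four block equations coming from $\mathbb A\,\mathbb A^{-1}=I$ directly; they force the blocks of the inverse to be $X=0$, $Y=Z=D$, and $W=-DBD$, so
\[\mathbb A^{-1}=\begin{pmatrix} 0 & D \\ D & -DBD\end{pmatrix}.\]
One checks immediately that this matrix is symmetric (it must be, by Proposition~\ref{prop-1}), so it is the adjacency matrix of the inverse weighted graph.

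Reading this off as a weighted graph gives the result. The top-left zero block says the original core vertices $i$ span no edges in $G^{-1}$; the diagonal block $D$ says each $i$ is joined only to $i'$ (with weight $d_i$), so in $G^{-1}$ the vertices $i$ are pendants. The block $-DBD$ has $(i,j)$-entry $-d_id_j\,(B)_{ij}$, which is nonzero exactly when $ij\in E(H)$; hence the vertices $i'$ carry a copy of $H$ (same edge set, with signs twisted by $-d_id_j$). Thus $G^{-1}$ is again a corona graph, namely the corona of $H$ with core $\{i'\}$ and pendants $\{i\}$, and the vertex swap $i\leftrightarrow i'$ is an isomorphism $G^{-1}\cong G$. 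This establishes self-invertibility.

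I expect no serious obstacle in the computation; the one point that genuinely uses the hypotheses is the claim that $G^{-1}$ is a corona graph with underlying graph exactly $H$. This rests on two observations: conjugating $B$ by the nonsingular diagonal $\pm1$ matrix $D$ only flips signs and therefore preserves the zero pattern, so the $i'$-block is all of $H$ rather than some proper subgraph, and simplicity of $G$ forces the diagonal of $B$ to vanish, so $-DBD$ has zero diagonal and $G^{-1}$ carries no loops. Alternatively, one can derive the same description from Theorem~\ref{thm:weight-inverse} by checking that the only vertex pairs joined by an $M$-alternating path whose deletion leaves a Sachs subgraph are the pendant edges $ii'$ and the length-three paths $i'\,i\,j\,j'$ with $ij\in E(H)$; this reproduces the blocks $D$ and $-DBD$ and makes the corona structure of $G^{-1}$ explicit in the style of Theorem~\ref{thm:stellated-inverse}.
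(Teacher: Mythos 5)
Your proof is correct, and the block-matrix computation is a genuinely different route from the paper's. The paper argues combinatorially: it applies Theorem~\ref{thm:weight-inverse}, observing that in a simple corona graph the only paths $P_{xy}$ for which $G-V(P_{xy})$ has a Sachs subgraph are the matching edges $u_iv_i$ and the length-three paths $u_iv_iv_ju_j$ with $v_iv_j\in E(H)$, and then reads off $\sigma^{-1}$ entry by entry --- this is exactly the alternative you sketch in your closing paragraph. Your main argument instead inverts $\mathbb A=\bigl(\begin{smallmatrix} B & D\\ D & 0\end{smallmatrix}\bigr)$ by solving the block equations, using $D^2=I$ to get $\mathbb A^{-1}=\bigl(\begin{smallmatrix} 0 & D\\ D & -DBD\end{smallmatrix}\bigr)$; the two answers agree, since the entry $-d_id_j(B)_{ij}$ of $-DBD$ equals $-\sigma(u_iv_i)\sigma(v_iv_j)\sigma(v_ju_j)$, which is the paper's $\sigma^{-1}(u_iu_j)=-\sigma(P_{u_iu_j})$, and the entry $d_i$ of $D$ is the paper's $\sigma^{-1}(u_iv_i)=\sigma(u_iv_i)$. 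What your route buys: it is elementary and self-contained (it never needs Theorem~\ref{thm:weight-inverse}), the explicit two-sided inverse makes even the determinant step via Corollary~\ref{cor-sign} logically redundant, and it generalizes at once --- replacing $D^{-1}=D$ by $D^{-1}=\mathrm{diag}(1/d_1,\dots,1/d_n)$ shows that every weighted corona graph with nonzero weights is self-invertible, not merely signed ones. What the paper's route buys is uniformity: the Sachs-subgraph/alternating-path machinery is the paper's central tool, and the same argument pattern handles stellated graphs (Theorem~\ref{thm:stellated-inverse}) and corona graphs in parallel. You also correctly isolate the two points where the hypotheses enter: simplicity of $G$ forces the diagonal of $B$, hence of $-DBD$, to vanish, so $G^{-1}$ has no loops; and the swap $i\leftrightarrow i'$ need only be an isomorphism of underlying graphs, which is all that the paper's definition of self-invertibility requires.
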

\begin{proof} 
Let $V(G)=\{v_1,v_2,...,v_n, u_1,u_2,...,u_n\}$ such that $d_G(u_i)=1$ and $v_iu_i\in E(G)$.
Then $G$ has a unique Sachs subgraph which is a perfect matching $M=\{v_iu_i| 0\le i\le n\}$.
Let $\mathbb A$ be the adjacency matrix of $(G,\sigma)$. 
Then $\det(\mathbb A)=(-1)^{|M|}$ by Corollary~\ref{cor-sign}. So $(G,\sigma)$ has inverse $(G^{-1},\sigma^{-1})$.

Let $P_{xy}$ be a path joining two vertices $x$ and $y$ of $G$. Since $G$ is a simple corona graph, $G-V(P_{xy})$ has a Sachs subgraph if and only if, for 
each $v_i\in V(P_{xy})$, we have $u_i\in V(P_{xy})$.  It follows immediately that $xy=u_iv_i$, or $x=u_i$, $y=u_j$ and $P_{xy}=u_iv_iv_ju_j$ for some $i$ and $j$.
By
Theorem~\ref{thm:weight-inverse}, for any two vertices $x, y\in V(G^{-1})$, it follows that $\sigma^{-1}(xy)=(\mathbb 
A^{-1})_{xy}=\sigma(P_{xy})(-1)^{\tau(x,y)}$. Hence,
\[
\sigma^{-1}(xy)=\left\{
 \begin{array}{lll}
\sigma(xy)  &\mbox{if $xy=u_iv_i$;}\\
-\sigma(P_{u_iu_j}) &\mbox{if $xy=u_iu_j$ where $P_{u_iu_j}=u_iv_iv_ju_j$;}\\
0  &\mbox{otherwise.}
 \end{array}
 \right.
\]
So $(G^{-1},\sigma^{-1})$ is a simple signed graph. In $G^{-1}$, $d_{G^{-1}}(v_i)=1$, $v_iu_i\in E(G)$, and $u_iu_j\in E(G^{-1})$
if and only if $v_iv_j\in E(G)$. So the mapping $\phi:V(G) \to V(G^{-1})$  
that $\phi(u_i)=v_i$ and $\phi(v_i)=u_i$ is an isomorphism between $G$ and $G^{-1}$. Hence $(G,\sigma)$ is self-invertible.
\end{proof}

%%%%%%%%%%%%%%%%%%%%%%%%
\section{Eigenvalues}
%%%%%%%%%%%%%%%%%%%%%%%%

Let $(G,w)$ be a weighted graph with $n$ vertices. Assume that $\lambda_1(G,w)\ge \lambda_2(G,w)\ge \cdots \lambda_n(G,w)$
are all eigenvalues of $(G,w)$. 
 The {\em spectrum} of $(G,w)$ is the family of all eigenvalues. 
The spectrum of $(G,w)$ {\em splits} about the origin if it has half positive eigenvalues and half negative eigenvalues. Let $H=\lfloor (n+1)/2\rfloor$ and $L=\lceil (n+1)/2\rceil$.
The median eigenvalues are $\lambda_{H}(G,w)$ and
$\lambda_L(G,w)$. If the spectrum of $(G, w)$ splits about the origin, then 
$\lambda_H(G, w)> 0> \lambda_L(G,w)$. Many graphs representing stable molecules 
have a spectrum split
about the origin. 
In \cite{KL}, it has been shown that the spectrum of the stellated graph of a tree splits about the origin. 

\begin{prop}\label{prop}
Let $(G,w)$ be an invertible graph and $(G^{-1},w^{-1})$ be its inverse. If the spectrum of $(G,w)$ splits about the origin,
so does the spectrum of $(G^{-1},w^{-1})$ and $\lambda_{H}(G,w)=1/\lambda_1(G^{-1},w^{-1})$ and $\lambda_{L}(G,w)=1/\lambda_n(G^{-1},w^{-1})$.
\end{prop}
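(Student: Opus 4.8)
The plan is to reduce everything to the elementary fact that $\mathbb A^{-1}$ has eigenvalues exactly reciprocal to those of $\mathbb A$, and then to track how the decreasing order of eigenvalues behaves under the map $x\mapsto 1/x$. First I would record the setup forced by the hypothesis: since $(G,w)$ is invertible we have $\det(\mathbb A)\ne 0$ by Proposition~\ref{prop-1}, so $0$ is not an eigenvalue; combined with the spectrum splitting about the origin this forces $n$ to be even with exactly $n/2$ positive and $n/2$ negative eigenvalues. Consequently $H=n/2$ and $L=n/2+1$, so that $\lambda_H(G,w)$ is the smallest positive eigenvalue and $\lambda_L(G,w)$ is the largest (i.e.\ closest to $0$) negative eigenvalue.

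Next I would invoke the standard fact that if $\mathbb A x=\lambda x$ with $\lambda\ne 0$, then $\mathbb A^{-1}x=\lambda^{-1}x$; thus the spectrum of $(G^{-1},w^{-1})$, whose adjacency matrix is $\mathbb A^{-1}$, is precisely $\{1/\lambda_1,\dots,1/\lambda_n\}$ counted with multiplicity. Since $x\mapsto 1/x$ preserves sign, exactly $n/2$ of these reciprocals are positive and $n/2$ are negative, so the spectrum of $(G^{-1},w^{-1})$ also splits about the origin. This settles the first assertion.

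For the two identities I would use that $x\mapsto 1/x$ is strictly decreasing on $(0,\infty)$ and, separately, on $(-\infty,0)$, while every positive reciprocal exceeds every negative one. From $\lambda_1\ge\cdots\ge\lambda_H>0$ we get $1/\lambda_H\ge\cdots\ge 1/\lambda_1>0$, and these positive values dominate the remaining negative reciprocals; hence the largest eigenvalue of $\mathbb A^{-1}$ is $1/\lambda_H$, i.e.\ $\lambda_1(G^{-1},w^{-1})=1/\lambda_H(G,w)$. Symmetrically, from $0>\lambda_L\ge\cdots\ge\lambda_n$ we get $1/\lambda_L\le\cdots\le 1/\lambda_n<0$, so the smallest eigenvalue of $\mathbb A^{-1}$ is $1/\lambda_L$, i.e.\ $\lambda_n(G^{-1},w^{-1})=1/\lambda_L(G,w)$. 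Inverting these two equalities yields $\lambda_H(G,w)=1/\lambda_1(G^{-1},w^{-1})$ and $\lambda_L(G,w)=1/\lambda_n(G^{-1},w^{-1})$, as required.

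The argument is essentially bookkeeping, so I expect no serious obstacle; the only point demanding care is the order reversal under reciprocation — in particular, verifying that the extreme eigenvalues of $\mathbb A^{-1}$ arise from the innermost (median) eigenvalues $\lambda_H,\lambda_L$ of $\mathbb A$ rather than from $\lambda_1$ or $\lambda_n$. This is exactly where the sign-preserving but order-reversing nature of $x\mapsto 1/x$ enters, and where the assumption that the spectrum splits about the origin (so that no reciprocal crosses from one half to the other) is used.
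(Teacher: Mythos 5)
Your proposal is correct and follows essentially the same route as the paper: the paper's own proof simply notes that $\lambda$ is an eigenvalue of $\mathbb A$ if and only if $1/\lambda$ is an eigenvalue of $\mathbb A^{-1}$ and that the splitting hypothesis then yields the claim, leaving implicit the bookkeeping you spell out. The details you add (invertibility forces $n$ even with $H=n/2$, $L=n/2+1$, and reciprocation is sign-preserving but order-reversing on each half, so the median eigenvalues of $\mathbb A$ become the extreme eigenvalues of $\mathbb A^{-1}$) are exactly the steps the paper compresses into ``the proposition follows.''
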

\begin{proof}
Since $(G,w)$ is invertible, its adjacency matrix $\mathbb A$ has inverse $\mathbb A^{-1}$
which is the adjacency matrix of $(G^{-1},w^{-1})$. Note that, $\lambda$ is an eigenvalue of
$\mathbb A$ if and only if $1/\lambda$ is an eigenvalue of $\mathbb A^{-1}$. 
Since the spectrum of $(G,w)$ splits about the origin, then the proposition follows.
\end{proof}

For non-weighted graphs $G$, it is well-known that  $G$ is bipartite if and only if the spectrum of $G$ is symmetric about
the origin. But for a weighted graph, one direction is necessarily 
but not the other. 

\begin{prop}\label{prop-3}
Let $(G,w)$ be a weighted simple bipartite graph. Then the spectrum of $(G,w)$ is symmetric 
about the origin. 
\end{prop}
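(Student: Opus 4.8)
The plan is to exploit the block structure of the adjacency matrix of a bipartite graph and to conjugate it by a $\pm1$ diagonal matrix so that $\mathbb A$ becomes similar to $-\mathbb A$. First I would fix a bipartition $V(G)=X\cup Y$ with every edge joining $X$ to $Y$, and order the vertices so that those of $X$ precede those of $Y$. With this ordering the weighted adjacency matrix takes the block anti-diagonal form
\[
\mathbb A=\begin{pmatrix} 0 & B\\ B^{\intercal} & 0\end{pmatrix},
\]
where $B$ is the weighted biadjacency matrix recording the edge-weights between $X$ and $Y$. The two diagonal blocks vanish precisely because $G$ is simple and bipartite: there are no loops and no edges inside either part, so no nonzero weight can occur on a within-$X$ or within-$Y$ entry.

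Next I would introduce the diagonal matrix $\mathbb D=\mathrm{diag}(I_{|X|},-I_{|Y|})$, which satisfies $\mathbb D^{-1}=\mathbb D$. A direct block multiplication gives
\[
\mathbb D\,\mathbb A\,\mathbb D=\begin{pmatrix} 0 & -B\\ -B^{\intercal} & 0\end{pmatrix}=-\mathbb A.
\]
Since $\mathbb D$ is invertible, this identity shows that $\mathbb A$ is similar to $-\mathbb A$, and hence that $\mathbb A$ and $-\mathbb A$ have identical spectra, multiplicities included.

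Finally I would read off the symmetry. If $\lambda$ is an eigenvalue of $\mathbb A$, then $\lambda$ is also an eigenvalue of $-\mathbb A$ by the similarity above, so $-\lambda$ is an eigenvalue of $\mathbb A$ with the same multiplicity; letting $\lambda$ range over the spectrum gives the stated symmetry about the origin. Equivalently, from an eigenvector $\mathbf x=(\mathbf x_X,\mathbf x_Y)$ of $\mathbb A$ for $\lambda$ one checks directly that $\mathbb D\mathbf x=(\mathbf x_X,-\mathbf x_Y)$ is an eigenvector for $-\lambda$.

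The argument is essentially routine, and I expect no genuine obstacle; the only step needing care is the verification of the block anti-diagonal form, which is exactly where the hypothesis that $G$ is simple (no loops) and bipartite enters. The weights play no role beyond populating $B$, so the classical unweighted statement extends verbatim to the weighted setting.
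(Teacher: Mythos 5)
Your proof is correct and is essentially the paper's own argument: both rest on the block anti-diagonal form $\mathbb A=\begin{pmatrix} 0 & B\\ B^{\intercal} & 0\end{pmatrix}$ and the observation that negating the $Y$-coordinates of an eigenvector for $\lambda$ produces an eigenvector for $-\lambda$. Your packaging of this as the similarity $\mathbb D\,\mathbb A\,\mathbb D=-\mathbb A$ with $\mathbb D=\mathrm{diag}(I_{|X|},-I_{|Y|})$ is just a tidier way of stating the same step (and makes the preservation of multiplicities explicit), as your own closing remark acknowledges.
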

\begin{proof}
Since $(G,w)$ is a weighted simple bipartite graph, then its adjacency matrix
$\mathbb A=\begin{bmatrix} \mathbf 0 & B\\
                                                     B^{\intercal} & \mathbf 0\end{bmatrix}$. 
Let $\lambda$ be an eigenvalue of $\mathbb A $, and 
$\mathbf x=\begin{bmatrix} \mathbf u \\ \mathbf v \end{bmatrix}$ an eigenvector of $\lambda$.
It is easily seen that $-\lambda$ is an eigenvalue of $\mathbb A $ with eigenvector
$\begin{bmatrix} \mathbf u \\ -\mathbf v \end{bmatrix}$. 
\end{proof}

\begin{figure}[!hbtp]\refstepcounter{figure}\label{fig:2tri}
\begin{center}
\includegraphics[scale=.8]{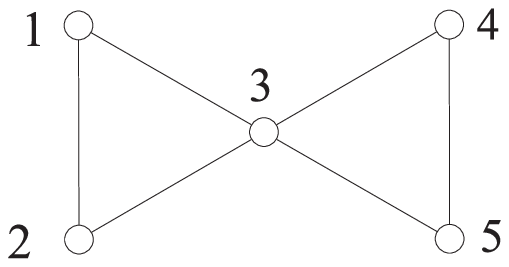}\\
{Figure \ref{fig:2tri}: A non-bipartite weighted graph $(G, w)$  which has spectrum symmetric about the origin.}
\end{center}
\end{figure}

\noindent {\bf Remark.} The other direction of the above proposition holds if $w$ is a 
positive function. If $w$ is not positive, then it is not always true. For example, the weighted 
graph $(G,w)$ in Figure~\ref{fig:2tri} with $w(e)=1$ if $e$ is an edge of the triangle $1231$ and $-1$ otherwise: if $\lambda$ is an eigenvalue of $(G,w)$ with eigenvector $\mathbf x=[x_1, x_2, x_3, x_4,x_5]^\intercal$, then $-\lambda$ is also an eigenvalue of $(G,w)$ with eigenvector $\mathbf x'=[x_5,x_4,x_3,x_2,x_1]^\intercal$.

\begin{thm}
Let $(G,\sigma)$ be a signed graph with a perfect matching $M$. If 
for any weight-function $w: E(G)\to I=[-1,1]$ such that $w(e)\in \{-1,1\}$ for every
$e\in M$, $(G,w)$ is invertible, then the spectrum of $(G,\sigma)$
splits about the origin.
\end{thm}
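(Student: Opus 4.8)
The plan is to prove this by a continuity (homotopy) argument on the weight function, exploiting two facts: the ordered eigenvalues of a real symmetric matrix vary continuously with its entries, and the hypothesis forbids the determinant from vanishing along an entire one-parameter family of admissible weightings. Since $G$ has a perfect matching $M$, the number of vertices $n=|V(G)|$ is even with $n=2|M|$. I would define a family of weight functions $w_t$, for $t\in[0,1]$, by setting $w_t(e)=\sigma(e)$ for $e\in M$ and $w_t(e)=t\,\sigma(e)$ for $e\in E(G)\setminus M$. Each $w_t$ is admissible in the sense of the hypothesis: $w_t(e)\in\{-1,1\}$ for $e\in M$, while $w_t(e)=t\,\sigma(e)\in[-1,1]$ for the remaining edges because $\sigma(e)=\pm1$ and $t\le 1$. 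Writing $\mathbb A_t$ for the adjacency matrix of $(G,w_t)$, we have $\mathbb A_1=\mathbb A(G,\sigma)$, whose spectrum we want to understand, whereas $\mathbb A_0$ is exactly the adjacency matrix of the signed perfect matching $M$, since all off-matching entries vanish at $t=0$.

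Next I would show that $0$ is never an eigenvalue of $\mathbb A_t$ along the whole interval. For each $t\in(0,1]$ the function $w_t$ is a genuine weight function (nonzero on every edge), so by hypothesis $(G,w_t)$ is invertible, and by Proposition~\ref{prop-1} this means $\det(\mathbb A_t)\ne 0$. At the endpoint $t=0$ I would argue directly: after reordering the vertices into matched pairs, $\mathbb A_0$ is block-diagonal with $2\times2$ blocks $\left(\begin{smallmatrix}0 & \pm1\\ \pm1 & 0\end{smallmatrix}\right)$, so $\det(\mathbb A_0)=\pm1\ne 0$. Hence $\det(\mathbb A_t)\ne 0$ for every $t\in[0,1]$.

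Finally I would run the continuity step. The entries of $\mathbb A_t$ are affine-linear in $t$, and the ordered eigenvalues $\lambda_1(t)\ge\cdots\ge\lambda_n(t)$ of a symmetric matrix depend continuously on the matrix; hence each $\lambda_i(t)$ is continuous on $[0,1]$. Because no $\lambda_i(t)$ ever equals $0$, each keeps a constant sign throughout, so the number of positive eigenvalues and the number of negative eigenvalues of $\mathbb A_t$ are independent of $t$. At $t=0$ each diagonal block contributes eigenvalues $\pm1$, so $\mathbb A_0$ has exactly $|M|=n/2$ eigenvalues equal to $+1$ and $n/2$ equal to $-1$; transporting this count to $t=1$ shows that $(G,\sigma)$ has $n/2$ positive and $n/2$ negative eigenvalues, i.e.\ its spectrum splits about the origin.

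I expect the delicate point to be largely expository rather than substantive: one must invoke the standard fact that the ordered eigenvalues of a continuously varying symmetric matrix move continuously, so that no eigenvalue can jump across $0$ without the determinant vanishing. The design of the homotopy is what makes the hypothesis bite everywhere: by holding $w_t|_M$ fixed at $\pm1$ and only scaling the off-matching weights, every intermediate $w_t$ stays inside the admissible class mapping into $[-1,1]$, so the hypothesis guarantees $\det(\mathbb A_t)\ne 0$ on all of $(0,1]$, while the deformation carries $(G,\sigma)$ down to the bare matching whose signature of eigenvalues is transparent.
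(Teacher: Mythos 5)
Your proof is correct and follows essentially the same route as the paper: both deform $(G,\sigma)$ to the bare signed perfect matching $\mathbb A_0$, use the invertibility hypothesis to keep $\det(\mathbb A_t)\ne 0$ along the way, and invoke continuity of eigenvalues to conclude the positive/negative counts at $t=1$ equal the $n/2$--$n/2$ split visible at $t=0$. Your explicit one-parameter path $w_t$ with $w_t|_M=\sigma|_M$ fixed is in fact a cleaner rendering of the paper's looser ``$\lambda(w)$ is continuous in $w$ and never zero'' argument, since it makes the connectedness of the family of weightings (which the paper leaves implicit) completely transparent.
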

\begin{proof}
Let $(G, w)$ be a weighted graph and $w: E(G)\to [-1,1]$ be a weight-function
such that $w(e)\in \{-1,1\}$ for $e\in M$. Let $\mathbb A$ be the adjacency
matrix of $(G,w)$. 

Since $(G,w)$ is invertible, so is $\mathbb A$. Hence $\det(\mathbb A)\ne 0$. 
Let $\mathbb A_0$ be the adjacency matrix of $(G,w_0)$ such that $w_0(e)=0$ if $e\in E(G)\backslash M$
and $w_0(e)\in \{-1,1\}$ if $e\in M$. Then $\mathbb A_0$ is the adjacency matrix of $M$, 
an invertible bipartite graph, whose spectrum is symmetric about the origin and not equal to 0 by 
Proposition~\ref{prop-3}.

Let $\lambda (w)$ be an eigenvalue of $\mathbb A$ and $\mathbf x$ be an eigenvector of $\lambda (w)$.
Then 
\[\lambda (w)=\frac{\langle\mathbb A \mathbf x, \mathbf x\rangle}{\langle\mathbf x,\mathbf x\rangle}.
\]
Hence $\lambda (w)$ is a function of $w$, which is continuous on $[-1,1]$. 
Since $\det(\mathbb A)\ne 0$, it follows that $\langle\mathbb A \mathbf x, \mathbf x\rangle\ne 0$. 
%Note that, $\mathbf x$ is a real value vector, and hence $\langle \mathbf x, \mathbf x\rangle>0$. 
Hence $\lambda (w)\ne 0$. So for any $w: E(G)\to [-1,1]$, $\lambda(w)$ 
has the same sign. Let $\sigma(e)\to \{-1,1\}$ for any $e\in E(G)$. Then $\lambda(\sigma) \lambda(w_0)>0$. Hence the spectrum of $(G,\sigma)$ splits about the origin.
\end{proof}

%\begin{prob}
%Which signed graphs have their spectra split about the origin?
%\end{prob}

\begin{thm}\label{thm:split}
Let $(G,\sigma)$ be a signed graph with a unique Sachs subgraph. If $G$ has a  
perfect matching $M$,
then the spectrum of $(G,\sigma)$ splits about the origin. 
\end{thm}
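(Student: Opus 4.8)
The plan is to reduce this statement to the theorem immediately preceding it, which asserts that if $(G,\sigma)$ has a perfect matching $M$ and $(G,w)$ is invertible for \emph{every} weight-function $w\colon E(G)\to[-1,1]$ taking values in $\{-1,1\}$ on $M$, then the spectrum of $(G,\sigma)$ splits about the origin. Thus the entire task becomes verifying the hypothesis of that theorem, i.e.\ showing $(G,w)$ is invertible for every such $w$. First I would record the structural observation that pins down the Sachs subgraph: since a perfect matching is itself a Sachs subgraph (all of its components are $K_2$) and $G$ has a \emph{unique} Sachs subgraph, that unique Sachs subgraph must be $M$; in particular its cycle-part $\mathcal C$ and loop-part $L$ are both $\emptyset$.

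Second, I would fix an arbitrary weight-function $w\colon E(G)\to[-1,1]$ with $w(e)\in\{-1,1\}$ for every $e\in M$, and compute $\det(\mathbb A)$ for $(G,w)$. The collection of Sachs subgraphs is a property of the underlying graph $G$ alone and does not depend on the weights, so $(G,w)$ inherits $M$ as its unique Sachs subgraph. Applying Theorem~\ref{thm:weight-det}, the sum over Sachs subgraphs collapses to the single term $S=M$, and since $\mathcal C=\emptyset$ and $L=\emptyset$ this term equals $w^{2}(M)(-1)^{|M|}$. Because $w(e)\in\{-1,1\}$ on every edge of $M$, we have $w^{2}(M)=\prod_{e\in M}w(e)^{2}=1$, so $\det(\mathbb A)=(-1)^{|M|}\ne 0$ and $(G,w)$ is invertible. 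As this holds for every admissible $w$, the hypothesis of the preceding theorem is met, and invoking that theorem yields at once that the spectrum of $(G,\sigma)$ splits about the origin.

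The one point that needs genuine care is that $w$ is allowed to take the value $0$ (it maps into $[-1,1]$, not $\mathbb R\setminus\{0\}$), so strictly speaking $(G,w)$ is not a weighted graph in the sense required by Theorem~\ref{thm:weight-det}. This is harmless: the determinant formula there is nothing but the Leibniz/permutation expansion, a polynomial identity in the matrix entries valid for all real values, and setting a weight to $0$ only deletes the corresponding term. Since the edges of $M$ carry nonzero weights its term survives untouched, while the uniqueness of the Sachs subgraph guarantees there were no competing terms to begin with; hence the computation of $\det(\mathbb A)$ above is unaffected. I do not anticipate any further obstacle, as the argument is essentially the weight-function analogue of Corollary~\ref{cor-sign}.
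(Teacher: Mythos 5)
Your proof is correct and takes essentially the same route as the paper: the paper likewise reduces to the unnamed theorem immediately preceding this one, verifying its invertibility hypothesis by observing that the unique Sachs subgraph must be the perfect matching $M$ and that $\det(\mathbb A)=(-1)^{|M|}$ for every admissible $w$ (the paper cites Corollary~\ref{cor-sign} for this step, where you redo the computation directly from Theorem~\ref{thm:weight-det}). Your added care about edges of weight $0$ addresses a point the paper silently glosses over, and you resolve it correctly.
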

\begin{proof}
Consider $(G,w)$ where $w: E(G)\to [-1,1]$ such that $w:M\to \{-1,1\}$. 
Since $G$ has a unique Sachs subgraph which is a perfect matching $M$, by Corollary~\ref{cor-sign}, we have 
\[\det(\mathbb A)=(-1)^{|M|}.\] 
Hence $(G,w)$ is invertible.
By the above theorem,
the spectrum of $(G,w)$ splits about the origin. So does the spectrum of $(G,\sigma)$. 
\end{proof}

Since both a stellated graph without isolated vertices and a corona graph 
have a perfect matching, the following results are direct corollaries of Theorem~\ref{thm:split}.

\begin{cor}\label{cor-stellated}
Let $G$ be a stellated graph with a unique Sachs subgraph. Then the spectrum of $G$ splits about the
origin if $G$ does not contain an isolated vertex.
\end{cor}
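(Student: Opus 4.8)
The plan is to deduce this directly from Theorem~\ref{thm:split} by regarding the plain graph $G$ as a signed graph and then verifying the two hypotheses of that theorem: that $G$ has a unique Sachs subgraph (given) and that $G$ has a perfect matching. Recall from the introduction that any graph may be viewed as a weighted, and hence signed, graph under the constant signature $\sigma\equiv 1$; under this identification the adjacency matrix, the eigenvalues, and the Sachs subgraphs of $G$ coincide with those of $(G,\sigma)$, so it suffices to show that the spectrum of $(G,\sigma)$ splits about the origin.

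First I would secure a perfect matching. By Lemma~\ref{lem:stellated} and the remark immediately following it, a stellated graph without isolated vertices always has a perfect matching; since by hypothesis $G$ carries no isolated vertex, $G$ possesses some perfect matching. A perfect matching is in particular a Sachs subgraph, all of its components being copies of $K_2$, so the uniqueness assumption forces the unique Sachs subgraph of $G$ to be exactly this perfect matching $M$. Hence $(G,\sigma)$ is a signed graph with a unique Sachs subgraph, and $G$ has a perfect matching $M$, which are precisely the hypotheses required.

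With both hypotheses of Theorem~\ref{thm:split} now in hand, that theorem immediately yields that the spectrum of $(G,\sigma)$, equivalently that of $G$, splits about the origin, completing the argument. There is essentially no genuine obstacle here, as all of the substance is packaged inside Theorem~\ref{thm:split} and Lemma~\ref{lem:stellated}. The only point deserving a moment of care is the bookkeeping that passes from the unsigned statement to the signed framework, together with the observation that the no-isolated-vertex hypothesis is exactly what guarantees a perfect matching and hence, by uniqueness, that the sole Sachs subgraph is a perfect matching rather than one containing an odd cycle, as would otherwise be permitted by Theorem~\ref{thm:unique-Sachs}.
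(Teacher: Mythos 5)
Your proof is correct and follows exactly the paper's route: the paper also derives this corollary directly from Theorem~\ref{thm:split}, using Lemma~\ref{lem:stellated} (and the remark after it) to supply the perfect matching guaranteed by the no-isolated-vertex hypothesis. Your additional observation that uniqueness forces the sole Sachs subgraph to be that perfect matching is a harmless elaboration of what is already packaged inside Theorem~\ref{thm:split}.
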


\begin{cor}\label{cor-corona}
The spectrum of a corona graph splits about the origin.
\end{cor}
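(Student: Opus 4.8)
The plan is to realize Corollary~\ref{cor-corona} as an immediate application of Theorem~\ref{thm:split}, after checking that a corona graph satisfies both of that theorem's hypotheses. The one conceptual point to dispose of first is that Theorem~\ref{thm:split} is stated for signed graphs, whereas a corona graph is an ordinary (unweighted) graph. This is harmless: as noted in the introduction, a graph is a weighted graph with the constant weight-function $w\equiv 1$, so I would view the corona graph $G$ as the signed graph $(G,\sigma)$ with $\sigma\equiv 1$. The adjacency matrix is unchanged, hence the ordinary spectrum of $G$ coincides with the spectrum of $(G,\sigma)$, and splitting about the origin for one is splitting for the other.

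With that identification in place, the two hypotheses of Theorem~\ref{thm:split} must be verified. First I would recall the corona structure: $G$ is obtained from some graph $H$ by appending to each vertex $v_i$ a new neighbor $u_i$ of degree $1$, with $v_iu_i\in E(G)$. The set of pendant edges $M=\{v_iu_i\}$ covers every $u_i$ and every $v_i$, so $M$ is a perfect matching of $G$; this disposes of the perfect-matching hypothesis at once.

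The remaining step is uniqueness of the Sachs subgraph, which is where the small amount of real content lies. The key observation is that each appended vertex $u_i$ has degree $1$ in $G$, so in any spanning subgraph all of whose components are $K_2$'s or cycles, $u_i$ cannot lie on a cycle (a cycle requires degree at least $2$). Hence $u_i$ must sit in a $K_2$ component, and its only incident edge is $v_iu_i$; consequently $v_iu_i$ belongs to every Sachs subgraph of $G$. Ranging over all $i$, these forced edges already cover all of $V(G)$, so any Sachs subgraph must equal $M$, and uniqueness follows. (This is precisely the remark recorded before Theorem~\ref{thm:corona}, that a corona graph has a unique Sachs subgraph equal to a perfect matching.)

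Having confirmed that $(G,\sigma)$ has a unique Sachs subgraph and that $G$ has a perfect matching, I would simply invoke Theorem~\ref{thm:split} to conclude that the spectrum of $(G,\sigma)$, and therefore the spectrum of the corona graph $G$, splits about the origin. I do not anticipate any genuine obstacle: the result is a direct corollary, and the only step meriting a line of justification is the uniqueness argument above, together with the routine reduction from the unsigned to the all-positive signed setting.
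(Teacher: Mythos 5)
Your proposal is correct and takes essentially the same route as the paper: the paper also observes that the pendant vertices of a corona graph force its unique Sachs subgraph to be the perfect matching of pendant edges, and then cites Theorem~\ref{thm:split} to conclude that the spectrum splits. The only difference is that you spell out the uniqueness argument and the reduction to the all-positive signature, both of which the paper treats as one-line remarks.
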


%%%%%%%%%%%%%%%%%%%%%%%%%
\section{Median eigenvalues}
%%%%%%%%%%%%%%%%%%%%%%%%%

Let $G$ be a graph and $\lambda_1\ge \lambda_2\ge \cdots \ge \lambda_n$, the eigenvalues of
$G$.
The difference of median eigenvalues 
$\Delta=\lambda_{H}-\lambda_{L}$ in chemistry
is called the HOMO-LUMO gap of the neutral pi-network molecule corresponding
the graph $G$, so that $\Delta$ is also
called the {\em HOMO-LUMO gap} of $G$ \cite{FP}. %The HOMO-LUMO gap is important from the chemistry point view. 
A graph $G$ is a special signed graph $(G,\sigma)$ such
that $\sigma: E(G)\to \{1\}$. So all results above can be applied to graphs as special cases.

Let $\mathbb A$ be the adjacency matrix of $G$. 
Define $x: V(G)\to \mathbb R$ such that
$x(i)=x_i$, and let $\mathbf x=(x_1, x_2, ..., x_n)^{\intercal}\in \mathbb R^n$ where.
Then by Rayleigh-Ritz quotient,
\[\lambda_1(G,\sigma)=\max_{\mathbf x \in \mathbb R^n}\frac{\langle\mathbb A \mathbf x, \mathbf x\rangle}{\langle\mathbf x, \mathbf x\rangle}=\max_{\mathbf x\in \mathbb R^n}\frac{\sum (\mathbb A)_{ij}x_ix_j}{\parallel x\parallel}\]
and 
\[\lambda_n(G,\sigma)=\min_{\mathbf x\in \mathbb R^n}\frac{\langle\mathbb A \mathbf x, \mathbf x\rangle}{\langle\mathbf x, \mathbf x\rangle}=\min_{\mathbf x\in \mathbb R^n}\frac{\sum (\mathbb A)_{ij}x_ix_j}{\parallel x\parallel}.\]\medskip

%{\bf Generally, I would like to prove the following theorem. However, I have some trouble to get %$\lambda_L(G)\ge -1$!!}

%\begin{thm}
%Let $G$ be a graph with a unique Sachs subgraph. If $G$ has a perfect matching $M$ 
%and its inverse is a signed graph,
%then $\lambda_L(G)\ge -1$ and $\lambda_H(G)\le 1$.
%\end{thm} 

\begin{thm}\label{thm:HL-Stellated}
Let $G$ be a stellated graph of a tree with at least two vertices. Then $-1\le \lambda_L(G)<0<\lambda_H(G)\le 1$.
\end{thm}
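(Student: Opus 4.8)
The plan is to push the whole question onto the inverse graph $G^{-1}$ and then extract both bounds from a single matching edge. First I would note that since $T$ is a tree with at least one edge, Lemma~\ref{lem:stellated} gives that $G=\mathrm{st}(T)$ has a unique perfect matching $M$, which is therefore its unique Sachs subgraph. Viewing $G$ as the signed graph $(G,\sigma)$ with $\sigma\equiv 1$, Theorem~\ref{thm:split} shows its spectrum splits about the origin, so $\lambda_H(G)>0>\lambda_L(G)$. Applying Proposition~\ref{prop} to the invertible graph $G$, the inverse $(G^{-1},\sigma^{-1})$ also has a spectrum splitting about the origin, with $\lambda_H(G)=1/\lambda_1(G^{-1})$ and $\lambda_L(G)=1/\lambda_n(G^{-1})$, where $\lambda_1(G^{-1})>0>\lambda_n(G^{-1})$. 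Because these extreme eigenvalues have fixed signs, the target inequalities $\lambda_H(G)\le 1$ and $\lambda_L(G)\ge -1$ are \emph{equivalent} to $\lambda_1(G^{-1})\ge 1$ and $\lambda_n(G^{-1})\le -1$, respectively.

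Next I would use Theorem~\ref{thm:stellated-inverse} to locate a clean $2\times 2$ block of $\mathbb A^{-1}$. That theorem, together with the fact established in its proof that $G^{-1}$ is simple, tells us the diagonal of $\mathbb A^{-1}$ vanishes. Moreover, for a matching edge $uv\in M$, the single edge $uv$ is itself an $M$-alternating path with $\tau(u,v)=0$ and $\sigma(P_{uv})=1$, so $\sigma^{-1}(uv)=(-1)^{0}\cdot 1=1$; thus $uv\in E(G^{-1})$ with weight $1$. Consequently the principal submatrix of $\mathbb A^{-1}$ indexed by $\{u,v\}$ equals $\begin{pmatrix}0&1\\1&0\end{pmatrix}$, whose eigenvalues are $+1$ and $-1$.

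Finally, using the Rayleigh--Ritz characterization of $\lambda_1$ and $\lambda_n$ recalled just before the theorem, I would test $\mathbb A^{-1}$ against the two vectors supported on this matching edge. With $\mathbf x=e_u+e_v$ one gets $\langle \mathbb A^{-1}\mathbf x,\mathbf x\rangle=2(\mathbb A^{-1})_{uv}=2=\langle\mathbf x,\mathbf x\rangle$, so $\lambda_1(G^{-1})\ge 1$; with $\mathbf x=e_u-e_v$ one gets $\langle \mathbb A^{-1}\mathbf x,\mathbf x\rangle=-2(\mathbb A^{-1})_{uv}=-2$ against $\langle\mathbf x,\mathbf x\rangle=2$, so $\lambda_n(G^{-1})\le -1$. (This is exactly Cauchy interlacing for the $2\times 2$ submatrix above.) Combining with the reductions of the first paragraph yields $0<\lambda_H(G)=1/\lambda_1(G^{-1})\le 1$ and $-1\le 1/\lambda_n(G^{-1})=\lambda_L(G)<0$, which is the claim. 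The only genuine content beyond assembling earlier results is the observation that each matching edge contributes an unweighted $K_2$ block to $\mathbb A^{-1}$; once that is in hand, a single interlacing (equivalently, two test vectors) pins both median eigenvalues into $[-1,1]$ simultaneously. I therefore expect no serious obstacle: the main care is in verifying that matching edges of the inverse carry weight exactly $\pm 1$ and that the diagonal of $\mathbb A^{-1}$ is zero, both of which follow from Theorem~\ref{thm:stellated-inverse}.
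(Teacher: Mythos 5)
Your proof is correct, but the core of it takes a genuinely different (and considerably simpler) route than the paper's. The reduction in your first paragraph is exactly the paper's: Lemma~\ref{lem:stellated} gives the unique perfect matching, Corollary~\ref{cor-stellated} gives the split spectrum, and Proposition~\ref{prop} turns $\lambda_H(G)\le 1$ and $\lambda_L(G)\ge -1$ into $\lambda_1(G^{-1})\ge 1$ and $\lambda_n(G^{-1})\le -1$. (One small leap on your part: a unique perfect matching is not \emph{a priori} a unique Sachs subgraph, since Sachs subgraphs may contain cycles; but the paper establishes this equivalence for stellated graphs in the discussion after Lemma~\ref{lem:stellated} and in the proof of Theorem~\ref{thm:stellated-inverse}, so citing it is legitimate.) Where you diverge is in proving the two bounds on the inverse. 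The paper builds \emph{global} $\{-1,1\}$-valued test vectors: it contracts the positively signed matching edges (respectively the pendant edges and the cliques $K'_{d_\ell}$), greedily chooses vertex signs so that the contracted graph contributes $\ge 0$ (respectively $\le 0$), and estimates $\langle\mathbb A\mathbf x,\mathbf x\rangle\ge 2|M|$ and $\langle\mathbb A\mathbf x,\mathbf x\rangle\le -2\sum_{\ell}\binom{d_\ell}{2}-2|E_P|$. You instead use the purely \emph{local} observation that a single matching edge $uv$ produces the principal block $\left(\begin{smallmatrix}0&1\\1&0\end{smallmatrix}\right)$ in $\mathbb A^{-1}$ --- diagonal zero because the inverse is simple, off-diagonal $+1$ because $uv$ is its own $M$-alternating path with $\tau(u,v)=0$, both facts indeed supplied by Theorem~\ref{thm:stellated-inverse} and its proof --- and then interlacing (equivalently the two Rayleigh quotients at $e_u\pm e_v$) pins $\lambda_1(G^{-1})\ge 1$ and $\lambda_n(G^{-1})\le -1$ simultaneously. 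What your route buys: it is much shorter, avoids the clique bookkeeping and the somewhat delicate greedy sign assignment, and works verbatim for any signed graph whose inverse is simple and contains an edge of weight $\pm1$; in particular it would also yield the corona-graph theorem that follows in the paper. What the paper's route buys: its global vectors give degree-dependent estimates on $\lambda_n(G^{-1},\sigma)$, which is precisely what the Remark after the theorem exploits to bound the HOMO--LUMO gap of stellated alkanes by $1.3$; your local argument stops at the uniform bounds $\pm 1$ and cannot be sharpened in that way.
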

\begin{proof} Let $G$ be a stellated graph of a tree $T$.
By Lemma~\ref{lem:stellated}, then $G$ has a unique perfect
matching $M$. Hence $G$ has an inverse which is a signed graph $(G^{-1},\sigma)$ by Theorem~\ref{thm:stellated-inverse}. By Corollary~\ref{cor-stellated}, we have 
$\lambda_L(G)<0<\lambda_H(G)$. 
In order to show $\lambda_H(G)\le 1$ and $\lambda_L(G)>-1$, it suffices to show that 
$\lambda_1(G^{-1}, \sigma )\ge 1$ and $\lambda_n(G^{-1},\sigma )<-1$
by Proposition~\ref{prop}.

Assume that $T$ has $k$ leaves 
with degree sequences as $1=d_1=\cdots =d_k<  d_{k+1}\le \cdots  d_{t-1}\le d_t$. Then in $G$, each vertex $v_{\ell}$
of $T$
with degree $d_{\ell}$  is replaced by a clique with size $d_{\ell}$, denoted by $K_{d_{\ell}}$. 

Let $i$ and $j$ be two vertices of $G$. 
By Theorem~\ref{thm:stellated-inverse},  $ij\in E(G^{-1})$ if
and only if there is an $M$-alternating path $P_{ij}$ joining them, and $\sigma(ij)=(-1)^{|E(P_{ij})\backslash M|}$. If $ij\in M$, then $\sigma(ij)=1$ that implies that $M$ is also a perfect matching of $G^{-1}$. 
If $i$ and $j$ satisfies that $ii', jj'\in M$ and $i'j'\in E(K_{d_{\ell}})$ for some $\ell$, then $i'ijj'$ is the only one $M$-alternating 
path of $G$ joining $i'$ and $j'$. Hence $ij\in E(G^{-1})$ and 
$\sigma(ij)=(-1)^{|E(P)\backslash M|}=-1$. Hence, $G^{-1}$ has a clique $K_{d_{\ell}}'$ corresponding to $K_{d_{\ell}}$
consisting of vertices in $N(V(K_{d_i}))$ and every edge in  $K_{d_i}'$ has weight $-1$.
For example, see Figure~\ref{fig:inverse}: two cliques of order 4 illustrated in dashed lines.

\begin{figure}[!hbtp]\refstepcounter{figure}\label{fig:inverse}
\begin{center}
\includegraphics[scale=1.3]{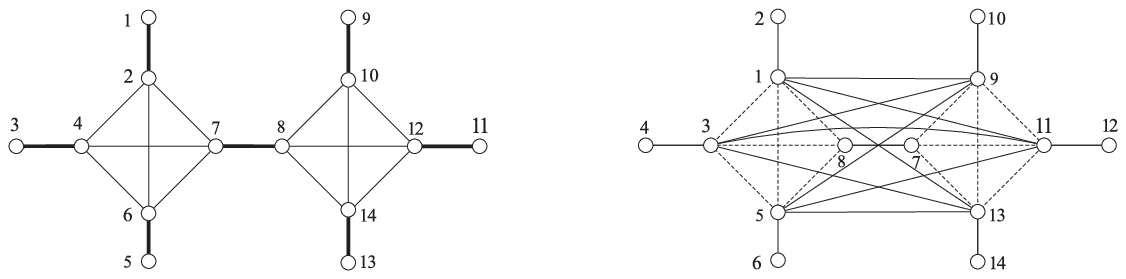}\\
{Figure \ref{fig:inverse}: A stellated graph of a tree (left: thick lines form a perfect matching) and its inverse (right: dashed lines have weight $-1$ and others have weight 1).}
\end{center}
\end{figure}

First, we show that $\lambda_1(G^{-1}, \sigma)\ge 1$. Let $Q$ be the graph obtained from 
$(G^{-1},\sigma)$
by contradicting all edges of $M$ which have positive signature.  
For any ordering of the vertices of $Q$: $q_1, q_2,\cdots, q_l$, let 
$E(q_{\alpha}):=\{q_{\gamma}q_{\alpha}| q_{\gamma}q_{\alpha}\in E(Q)\mbox{ and } \gamma<\alpha\}$.
Assign a weight $x(q_{\alpha})\in \{-1,1\}$ to each vertex $q_{\alpha}$ such that
\[\sum_{q_{\gamma}q_{\alpha} \in E(q_{\alpha})} x(q_{\gamma})\sigma(q_{\gamma}q_{\alpha})x(q_{\alpha})\ge 0.\]
The weight-function always exists because we can adjust the sign of $x(q_{\alpha})$ to change
the above inequality. Hence, $Q$ has a weight-function $x: V(Q)\to \{-1,1\}$ such that
\[\sum_{q_{\gamma} q_{\alpha}\in E(Q)} x(q_{\gamma})\sigma(q_{\gamma}q_{\alpha})x(q_{\alpha})\ge 0.\]
Let $i, j\in V(G^{-1})$ and $ij\in M$. Assume that  $ij$ is contracted to a vertex $q_{\alpha}$ 
in $Q$.
Now, extend the weight-function $x$ to $V(G^{-1})$ such that $x(i)=x(j)=x(q_{\alpha})$, and
define the vector $\mathbf x: V(G^{-1})\to \mathbb \{-1,1\}^{n}$ such that $x_i =x(i)$.  Let $\mathbb A$ be the
adjacency matrix of $(G^{-1},\sigma)$. Note that $E(G^{-1})=E(Q) \cup M$. 
Then it follows that 
\begin{align*}\langle\mathbb A \mathbf x, \mathbf x\rangle=2\sum_{ij\in E(G^{-1})} (\mathbb A)_{ij}x_ix_j
&=2\sum_{ij\in E(Q)} (\mathbb A)_{ij}x_ix_j +2\sum_{ij\in M} (\mathbb A)_{ij}x_ix_j\\
&=2\sum_{ij\in E(Q)} x_i\sigma(ij)x_j +2\sum_{ij\in M} x_i\sigma(ij)x_j\\
&\ge 2 |M|.
\end{align*}
Further, 
\[\lambda_1(G^{-1},\sigma)\ge 
\frac{2\sum_{ij\in E(G^{-1})} (\mathbb A)_{ij}x_ix_j}{\parallel x\parallel}\ge \frac{2|M|}{|V(G)|}=1.\]

In the following, we concentrate on the upper bound for $\lambda_n(G^{-1}, \sigma)$. Let $E_P$ 
be the set of all pendant edges (incident with a degree-1 vertex). Then $E_P\subseteq M$. So 
for any edge $ij\in E_P$, $\sigma(ij)=1$. Let $R$ be
the graph obtained from $G^{-1}$ by contracting all edges in $E_P$ and all cliques $K_{d_{\ell}}'$ for $k+1\le \ell\le n$.
By a similar argument as above, $R$ has a weight-function $x: V(R)\to \{-1,1\}$ such 
that
\[\sum_{r_{\gamma}r_{\alpha}\in E(R)} x(r_{\gamma})\sigma(r_{\gamma}r_{\alpha})x(r_{\alpha})\le 0,\] 
where $r_{\gamma}, r_{\alpha}$ are vertices of $R$. Now, define the vector $\mathbf x$
such that $x_i=x(r_{\alpha})$ if $i\in V(K_{\alpha}')$, and then $x_j=-x_i$ if $ij\in E_P$ and
$i\in V(K_{\alpha}')$. Note that $E(G^{-1})=E(R)\cup \big (\bigcup E(K_{\alpha}')\big ) \cup E_P$. So 
\begin{align}
\langle\mathbb A \mathbf x, \mathbf x\rangle &=2\sum_{ij\in E(G^{-1})} (\mathbb A)_{ij}x_ix_j\\
&=2\sum_{ij\in E(R)} \sigma(ij)x_ix_j +2\sum_{d_{k+1}\le \alpha\le d_t}\sum_{ij\in E(K'_{\alpha})} \sigma( ij)x_ix_j+2\sum_{ij\in E_P} \sigma(ij)x_ix_j\\
&\le 2\sum_{d_{k+1}\le \alpha\le d_t}\sum_{ij\in E(K'_{\alpha})} (-1)-2|E_P|\\
&= -2\sum_{k+1\le \ell\le t} {d_{\ell}\choose 2}-2|E_P|.
\end{align}
Hence, 
\begin{align}
\lambda_n(G^{-1},\sigma)\le 
\frac{2\sum_{ij\in E(G^{-1})} (\mathbb A)_{ij}x_ix_j}{\parallel x\parallel}\le \frac{-2\sum_{k+1\le \ell\le t} {d_{\ell}\choose 2}-2|E_P|}{|V(G)|}.
\end{align} 
Since a vertex of $G^{-1}$ is either a pendent vertex or contained in $K_{\alpha}'$ for some $\alpha$,
it follows that $2\sum_{k\le \ell\le t} {d_{\ell}\choose 2}+2|E_P|\ge |V(G^{-1})|$ and equality holds if 
and only if $G$ is $K_2$. 
So $\lambda_n(G^{-1},\sigma)\le -1$.
This completes the proof.
\end{proof}

\noindent{\bf Remark.} For some specific trees, a better bound for HOMO-LUMO gap of 
their stellated graphs could be obtained
 by the method used in the above proof. For example, the alkanes, trees with only degree-1
vertices and degree-4 vertices. If $G$ is a stellated graph of an alkane, then 
$d_{k+1}=\cdots =d_{t}=4$ and hence ${d_{\ell} \choose 2}= 6$. On the other hand, by
degree condition, it is easily deduced that $k=(2|V(G)|-2)/3$ and $t-k=(|V(G)|+2)/3$.
So, by Inequality (5), $\lambda_n(G^{-1},\sigma)\le -2(6(t-k)+2k)/|V(G)|<-10/3$. 
Hence the HOMO-LUMO gap for
stellated graphs of alkanes is at most 1.3.

\begin{thm}
Let $G$ be a connected corona graph. Then $-1\le \lambda_L(G)<0< \lambda_H(G)\le 1$.
\end{thm}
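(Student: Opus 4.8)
The plan is to mirror the proof of Theorem~\ref{thm:HL-Stellated} almost step for step, since a connected corona graph $G$ also has a unique Sachs subgraph that is a perfect matching $M$ (by the discussion preceding Theorem~\ref{thm:corona}) and is self-invertible by Theorem~\ref{thm:corona}. First I would invoke Corollary~\ref{cor-corona} to get $\lambda_L(G)<0<\lambda_H(G)$, so that the spectrum splits about the origin. By Proposition~\ref{prop}, the reciprocal relation $\lambda_H(G)=1/\lambda_1(G^{-1},\sigma^{-1})$ and $\lambda_L(G)=1/\lambda_n(G^{-1},\sigma^{-1})$ holds, so it suffices to prove the two bounds $\lambda_1(G^{-1},\sigma^{-1})\ge 1$ and $\lambda_n(G^{-1},\sigma^{-1})\le -1$ on the inverse graph.

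The next step is to understand the explicit structure of $(G^{-1},\sigma^{-1})$, which is already worked out inside the proof of Theorem~\ref{thm:corona}. Writing $V(G)=\{v_1,\dots,v_n,u_1,\dots,u_n\}$ with $d_G(u_i)=1$ and $v_iu_i\in M$, that proof shows $\sigma^{-1}(u_iv_i)=\sigma(v_iu_i)$ on the matching edges and $\sigma^{-1}(u_iu_j)=-\sigma(P_{u_iu_j})$ whenever $v_iv_j\in E(G)$, with no other edges. So $G^{-1}$ carries the same perfect matching $M=\{v_iu_i\}$ (now with $v_i$ pendant), and among the formerly-pendant vertices $u_i$ there is a copy of the base graph $H$ (with $v_iv_j\in E(G)\iff u_iu_j\in E(G^{-1})$) whose edges carry the adjusted signs. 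This is the corona analogue of the signed clique structure that drove the stellated case.

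For the lower bound $\lambda_1(G^{-1},\sigma^{-1})\ge 1$ I would reuse the sign-adjustment trick verbatim: let $Q$ be obtained from $(G^{-1},\sigma^{-1})$ by contracting the positively-signed matching edges, greedily choose a vertex weighting $x\colon V(Q)\to\{-1,1\}$ making every newly-added edge contribute nonnegatively, then extend $x$ along $M$ by setting $x_{u_i}=x_{v_i}$. Since $E(G^{-1})=E(Q)\cup M$, the quadratic form satisfies $\langle\mathbb A\mathbf x,\mathbf x\rangle\ge 2|M|=|V(G)|$, giving $\lambda_1(G^{-1},\sigma^{-1})\ge 2|M|/|V(G)|=1$ by Rayleigh--Ritz. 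For the upper bound $\lambda_n(G^{-1},\sigma^{-1})\le -1$ I would again contract the pendant matching edges (now incident to the $v_i$) to pass to the signed base graph $H$ on the $u_i$, pick a weighting making the $H$-edges contribute nonpositively, and then set $x_{v_i}=-x_{u_i}$ across each pendant edge so that every matching edge contributes exactly $-2$. Summing over the $n$ matching edges yields $\langle\mathbb A\mathbf x,\mathbf x\rangle\le -2n=-|V(G)|$, hence $\lambda_n(G^{-1},\sigma^{-1})\le -1$.

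The main obstacle is bookkeeping the signs correctly in the two weighting arguments: unlike the stellated case, the subgraph on the $u_i$ is an arbitrary signed graph $H$ rather than a disjoint union of negatively-signed cliques, so I cannot simply read off that each edge contributes $-1$. The greedy ordering argument (used twice in Theorem~\ref{thm:HL-Stellated}) must be applied to this general signed $H$ to force the desired sign on $\langle\cdot,\cdot\rangle_H$, and then the controlled propagation of $x$ across $M$ (equal signs for the lower bound, opposite signs for the upper bound) is what pins each matching-edge contribution to the exact value needed. Once the sign conventions from the proof of Theorem~\ref{thm:corona} are tracked faithfully, the estimate $2|M|=|V(G)|$ closes both inequalities simultaneously, so the connectedness hypothesis only enters to ensure $G$ is a genuine corona with the claimed unique Sachs structure.
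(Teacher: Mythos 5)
Your proposal is correct and follows the paper's own route in its skeleton: the spectrum splits by Corollary~\ref{cor-corona}, Proposition~\ref{prop} reduces everything to the two bounds $\lambda_1(G^{-1},\sigma^{-1})\ge 1$ and $\lambda_n(G^{-1},\sigma^{-1})\le -1$, and your lower bound is exactly the paper's (the greedy $\pm 1$ weighting after contracting the positively signed matching edges, imported from Theorem~\ref{thm:HL-Stellated}). The one place you diverge is the upper bound, and there the paper is simpler than what you propose. Since the theorem concerns an \emph{unsigned} corona graph $G$, Theorem~\ref{thm:corona} forces every non-matching edge of $G^{-1}$ (the copy of $H$ sitting on the formerly pendant vertices) to carry sign $-1$ and every matching edge to carry sign $+1$; so the single explicit vector with $x_i=1$ on the $H$-side and $x_i=-1$ on the pendant side makes \emph{every} edge of $G^{-1}$ contribute $-1$, giving $\langle\mathbb A\mathbf x,\mathbf x\rangle=-2|E(G)|\le -|V(G)|$ in one line (equality only for $K_2$). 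Your worry about ``an arbitrary signed graph $H$'' therefore does not arise for the statement at hand, and your second run of the greedy argument is unnecessary, though valid; it even extends to signed coronas, provided you fix one small point: your rule $x_{v_i}=-x_{u_i}$ forces each matching edge to contribute $-2$ only because $\sigma^{-1}(u_iv_i)=+1$ here, whereas in a genuinely signed corona you would need $x_{v_i}=-\sigma(u_iv_i)x_{u_i}$. Also note your estimate $\langle\mathbb A\mathbf x,\mathbf x\rangle\le -2|M|$ is weaker than the paper's $-2|E(G)|$, but both suffice to conclude $\lambda_n(G^{-1},\sigma^{-1})\le -1$ and hence $\lambda_L(G)\ge -1$.
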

\begin{proof}
Let $G$ be the corona of a connected graph $H$. Then $G$ has a unique Sachs subgraph which is a perfect matching $M$. By Theorem~\ref{thm:corona}, $G$ has an inverse $(G^{-1},\sigma)$ and 
$G$ is isomorphic to $G^{-1}$. 
For an edge $ij\in E(G^{-1})$, $\sigma(ij)=-1$ if $ij\notin M$ and $\sigma(ij)=1$ if $ij\in M$.
Since $G^{-1}$ is also a corona graph, every edge $ij\in M$ is incident with a vertex of degree-1. 

Let $\mathbb A$ be the adjacency matrix of $(G^{-1},\sigma)$. A similar argument as the proof of 
Theorem~\ref{thm:HL-Stellated} shows that $\lambda_1(G^{-1},\sigma)\ge 1$. 

For the upper bound of $\lambda_n(G^{-1},\sigma)$, let $\mathbf x$ be the vector such 
that $x_i=1$ if $i\in V(H)$ and $x_i=-1$ if $i\in V(G)\backslash V(H)$. Note that all vertices in
$V(G)\backslash V(H)$ has degree 1. 
Hence 
\begin{align}
\lambda_n(G^{-1}, \sigma)\le\frac{\langle \mathbb A \mathbf x, \mathbf x\rangle}{\parallel \mathbf x\parallel}=\frac{2\sum_{ij}\sigma(ij)x_ix_j}{|V(G)|}=\frac{-2|E(G)|}{|V(G)|}\le -1.
\end{align}
The last inequality in (6) holds if and only if $G$ is a $K_2$. (Otherwise it will be $-3/2$.)

By  Proposition~\ref{prop} and Corollary~\ref{cor-corona}, we have $\lambda_H(G)=1/\lambda_1(G^{-1},\sigma)\le 1$ and 
$\lambda_L(G)=1/\lambda(G^{-1},\sigma)\ge -1$. This completes the proof. 
\end{proof}

\section*{Acknowledgement}

The research was supported by the Welch Foundation of 
Houston, Texas (grant BD-0894).  Some of the research was conducted while
the first author was visiting
Texas A\&M University at Galveston.


\begin{thebibliography}{00}
\parskip=-.5mm

\bibitem{BG} R.B. Bapat and E. Ghorbani, Inverses of triangular matrices and bipartite graphs, Linear
Algebra App. 447 (2014) 68-73.

%\bibitem{AK} S. Akbari and S. Kirkland, On unimodular graphs, Linear Algebra Appl. 421 (2001) 3-15.

\bibitem{CGS} D. Cvetkovi\'c, I. Gutman and S. Simi\'c, On self-pseudo-inverse graphs, 
Univ. Beograd. Publ. Elektrotehn. Fak. Ser. Mat. fiz. 602-633 (1978) 111--117.


\bibitem{DS} R. Donaghey and L.W. Shapiro, Motzkin numbers, J. Combin. Theory Ser. A 23 (1977) 291--301.

%\bibitem{DH03} E.R. van Dam and W.H. Haemers, Which graphs are determined by their spectrum? 
%Linear Algebra App. 373 (2003) 241--272.

\bibitem{OF98} O. Favaron, Irredundance in inflated graphs, J. Graph Theory 28 (2) (1998) 97--104.

\bibitem{FP} P.W. Fowler and T. Pisanski, HOMO-LUMO maps for chemical graphs, MATCH Commun.
Math. Comput. Chem. 64 (2010) 373--390.

\bibitem{GHZ} K.A. Germina, S. Hameed K and T. Zaslavsky,
On product and line graphs of signed graphs, their eigenvalues and energy,
Linear Algebra Appl. 435 (2011) 2432--2450.

\bibitem{G} C.D. Godsil, Inverses of trees, Combinatorica 5 (1985) 33--39.

\bibitem{GM} C.D. Godsil and B. Mckay, Some computational results on the spectra of graphs, In: Combinatorial 
Mathematics IV (L.R.A Casse and W.D Wallis ed.), Lecture Notes in Math. 560 (1976) 73-92. 

\bibitem{GR} I. Gutman and D. Rouvray, An approximate topological formula for the HOMO-LUMO
separation in alternate hydrocarbons, Chem. Phys. Lett. 62 (1979) 384--388.

\bibitem{H} F. Harary, The determinant of the adjacency matrix of a graph, SIAM Rev. 4 (1962) 202--210.

\bibitem{HM} F. Harary and H. Minc, Which nonnegative matrices are self-inverse? Math. Mag. 49 (2) (1976) 91--92.

\bibitem{K} D.J. Klein, Treediagonal matrices and their inverses, Linear Algebra Appl. 42 (1982) 109--117.

\bibitem{KL} D.J. Klein and C.E. Larson, Eigenvalues of saturated hydrocarbons, J. Math. Chem. 
 51 (2013) 1608--1618.

\bibitem{KM} D.J. Klein and A. Misra, Minimally Kekuleniod $\pi$-networks and reactivity for
acyclics, Croatica Chem. Acta 77 (1-2) (2004) 179--191.

\bibitem{KYY} D.J. Klein, Y. Yang and D. Ye, HOMO-LUMO gaps for sub-graphenic and sub-buckytubic species, 
Proc. R. Soc. A, accepted. 

\bibitem{JK} J.H. Koolen and V. Moulton, Maximal energy graphs, Adv. Appl. Math. 26 (2001) 47--52.
 
\bibitem{L} L. Lov\'asz, Combinatorial Problems and Exercises, North-Holland, Amsterdam, 1979.

\bibitem{LP} L. Lov\'asz and M.D. Plummer, Matching Theory, AMS Chelsea Publishing, 2009.

%\bibitem{MK} B. Mandal and D.J. Klein, Kekulean transformation of Kekulean alternants.

\bibitem{MM} C. McLeman and E. McNicholas, Graph invertibility, Graphs Combin. 30 (2014) 977-1002.

\bibitem{BM1} B. Mohar, Median eigenvalues and the HOMO-LUMO index of graphs, J. Combin. Theory Ser. B 112 (2015) 78--92.

\bibitem{BM2} B. Mohar, Median eigenvalues of bipartite subcubic graphs, Combin.
Probab. Comput., In press.

\bibitem{BM3} B. Mohar, Median eigenvalues of bipartite planar graphs, MATCH Commun. Math. Comput. 
Chem. 70 (2013) 79--84.

\bibitem{BM-TR} B. Mohar and B. Tayfeh-Rezaie, Median eigenvalues of bipartite graphs, J. Alg. Combin., submitted.

\bibitem{TS} T. Shirai, The spectrum of infinite regular line graphs, Trans. Am. Math. Soc. 352 (1) (2000) 115--132.

\bibitem{SC} R. Simion and D. Cao, Solution to a problem of C.D. Godsil redarding bipartite graphs
with unique perfect matching, Combinatorica 9 (1989) 85--89.

\bibitem{JJS} J.J. Sylvester, On an application of the new atomic theory to the graphical representation of the invariants and covariants of binary quantics, with three appendices, Am. J. Math. 1 (1878) 64--104.

\bibitem{Z93} T. Zaslavsky, Signed graphs, Discrete Appl. Math. 4 (1982) 47--74. %(Erratum: Discrete Appl. Math. 5 (1983) 248.)

\bibitem{Z13} T. Zaslavsky, Signed graphs and geometry, (2013), http://arxiv.org/abs/1303.2770.

\bibitem{Z98} T. Zaslavsky, A mathematical bibliography of signed and gain graphs and allied areas, Electron. J. Combin. 8 (1998) (VII ed., Dynamic Surveys \#DS8, 124pp.).

\bibitem{ZC} F. Zhang and A. Chang, Acyclic molecules with greatest HOMO-LUMO separation, Discrete
Appl. Math. 98 (1999) 165--171.

\end{thebibliography}
\end{document}